\theoremstyle{plain}
\newtheorem{thm}{Theorem}[section]
\newtheorem{cor}[thm]{Corollary}
\newtheorem{defi}[thm]{Definition}
\newtheorem{prop}[thm]{Proposition}
\newtheorem{rem}[thm]{Remark}
\newtheorem{ques}[thm]{Question}
\newtheorem{conj}[thm]{Conjecture}
\newtheorem{exam}[thm]{Example}
\newtheorem{prob}[thm]{Problem}
\def\cal{\mathcal}
\def\bbb{\mathbb}
\def\op{\operatorname}
\renewcommand{\phi}{\varphi}
\newcommand{\N}{\bbb{N}}
\newcommand{\Z}{\bbb{Z}}
\newcommand{\Q}{\bbb{Q}}
\let\@@pmod\pmod
\DeclareRobustCommand{\pmod}{\@ifstar\@pmods\@@pmod}
\def\@pmods#1{\mkern4mu({\operator@font mod}\mkern 6mu#1)}
\begin{document}
\title[Geometric progressions in the sets of values of rational functions]{Geometric progressions in the sets of values of rational functions}

\author{Maciej Ulas}

\keywords{rational function, geometric progression, elliptic curve}
\subjclass[2000]{11D41}
\thanks{The research of the authors is supported by the grant of the National Science Centre (NCN), Poland, no. UMO-2019/34/E/ST1/00094}

\maketitle
\begin{abstract}
Let $a, Q\in\Q$ be given and consider the set $\cal{G}(a, Q)=\{aQ^{i}:\;i\in\N\}$ of terms of geometric progression with 0th term equal to $a$ and the quotient $Q$. Let $f\in\Q(x, y)$ and $\cal{V}_{f}$ be the set of finite values of $f$. We consider the problem of existence of $a, Q\in\Q$ such that $\cal{G}(a, Q)\subset\cal{V}_{f}$. In the first part of the paper we describe several classes of rational function for which our problem has a positive solution. In particular, if $f(x,y)=\frac{f_{1}(x,y)}{f_{2}(x,y)}$, where $f_{1}, f_{2}\in\Z[x,y]$ are homogenous forms of degrees $d_{1}, d_{2}$ and $|d_{1}-d_{2}|=1$, we prove that $\cal{G}(a, Q)\subset \cal{V}_{f}$ if and only if there are $u, v\in\Q$ such that $a=f(u, v)$. In the second, experimental, part of the paper we study the stated problem for the rational function $f(x, y)=(y^2-x^3)/x$. We relate the problem to the existence of rational points on certain elliptic curves and present interesting numerical observations which allow us to state several questions and conjectures.
\end{abstract}

\begin{center}
Dedicated to Jaap Top on the occasion of his $60+2+\varepsilon$ birthday.
\end{center}

\section{Introduction and motivation}\label{sec1}

The problem of finding all integer solutions of the Diophantine equation
$$
x^2+7=2^n
$$
is a classical one. In fact, the above equation is the famous Ramanujan-Nagell equation. Ramanujan conjectured that the equation has only five solutions corresponding to $n=3, 4, 5, 7, 15$ \cite{Ram}. This was proved by Nagell in \cite{Nag}. Today, by a Ramanujan-Nagell equation we understand a general equation of the form
$$
x^2+D=aQ^n,
$$
where $a, Q, D\in\Z$ are fixed. The theory of such equations is quite well developed and we have many explicit results concerning form of solutions of particular equations or the number of integer solutions (see for example \cite{Beu1, BS} and references given therein). It is clear that one can consider even more general equation of the form
\begin{equation}\label{GRNequation}
f(x)=aQ^{n},
\end{equation}
where $f$ is a given polynomial with rational coefficients and $a, Q\in\Z$ are fixed. We know that if $f$ has at least two roots in $\overline{\Q}$, then (\ref{GRNequation}) has only finitely many solutions in integers $x, n$. In fact, even if we allow $Q$ to vary, then, under mild conditions on $f$, the equation (\ref{GRNequation}) has only finitely many solutions in integers $x, Q, n$. More on this line of research, or to be more precise, the realm of this type of Diophantine equations, can be found in the monograph \cite{ShoTij}.

Let us observe that the result concerning the solvability of (\ref{GRNequation}), where $a, Q$ are fixed, can be phrased in a different way. More precisely, let us consider the set
$$
\cal{G}(a, Q)=\{aQ^{i}:\;i\in\N\},
$$
where $a, Q\in\Q, aQ(Q^2-1)\neq 0$, i.e., an infinite set of values of geometric progression with initial term equal to $a$ and the quotient equal to $Q$, and the set $\cal{W}_{f}=\{f(x):\;x\in\Z\}$ - the set of values of the polynomial $f$. In other words, the equation (\ref{GRNequation}) is solvable in integers if and only if $\cal{G}(a, Q)\cap \cal{W}_{f}$ is non-empty. From our discussion it follows that if $f\in\Q[x]$, then the only possibility for $\cal{G}(a, Q)\cap \cal{W}_{f}$ to be infinite is when $f$ is a power of a linear polynomial. Moreover, if $f$ is linear polynomial then it is not difficult to construct infinitely many pairs $a, Q\in\Z$ such that the stronger condition $\cal{G}(a, Q)\subset \cal{W}_{f}$ holds. Indeed, let $f(x)=Ax+B$, where $A, B$ are fixed and observe that for each $t, T\in\Z$ we have $\cal{G}(f(t),AT+1)\subset \cal{W}_{f}$. This is a consequence of the identity $f(v_{n})=f(t)(AT+1)^{n}$, where $v_{0}=f(t)$ and for $n\geq 1$ we have $v_{n}=(AT+1)v_{n-1}+BT$.

Let us change a perspective and consider polynomials with more variables. Then, it is possible that $\cal{G}(a, Q)$ is contained in the set of values of $f$ even if the degree of $f$ is $\geq 2$. For example, if $p\equiv 1\pmod*{4}$ is a prime number, then the set $\cal{G}(1, p)=\{p^{n}:\;n\in\N\}$ is contained in the set of values of the polynomial $x^2+y^2$. This simple observation motivated us to consider, for a given $f\in\Q(x,y)$, the set
$$
\cal{V}_{f}=\{f(u,v)\in\Q:\;u, v\in\Q\;\mbox{for which}\;f(u,v)\;\mbox{determined}\}
$$
and study the following general problem.

\begin{prob}\label{mainprob}
Let $f\in\Q(x,y)$ be given. Does there exist rational numbers $a, Q$ such that $\cal{G}(a, Q)\subset \cal{V}_{f}$?
\end{prob}

Let us note that the above problem in certain cases can be seen as a variant of a well studied problem. More precisely, if $f\in\Z[x,y]$ is a homogenous form of degree $d\geq 3$, then for given $a, Q\in\Z$ the equation $f(x,y)=aQ^{i}$ is a special case of a so called Thue-Mahler equation. It particular, it has only finitely many solutions $(x, y, i)$ in co-prime integers $x, y$ and $i\in\N$. This was proved by Mahler in \cite{Mah} (generalizing an earlier result of Thue \cite{Th} concerning the equation $f(x,y)=A$, where $A\in\Z\setminus\{0\}$ is fixed). However, we do not assume that $x, y$ are integers. This assumption makes the problem more interesting and, in certain cases, allows to use the techniques typically applied in Diophantine problems where there is no real restriction between rational and integral solutions (like in case of homogenous problems).

Let us describe the content of the paper in some details. In Section \ref{sec2} we present several classes of rational functions for which Problem \ref{mainprob} has a solution. We also introduce a distinction between solutions of Problem \ref{mainprob} in the case of weighted homogenous functions of degree $d$. In Section \ref{sec3} we give some observations concerning Problem \ref{mainprob} for the rational function $f(x,y)=(y^2-x^3)/x$ and shows connections with the existence of rational points on certain elliptic curve (Theorem \ref{fixeda}). As a consequence of our investigations, we prove that for each $a\in\{1, \ldots, 10^{3}\}$ such that the elliptic curve $y^2=x^3+ax$ has positive rank, there are infinitely many values of $Q\in\Q$ such that $Q$ is not a square and $\cal{G}(a, Q)\subset \cal{V}_{f}$. Finally, in the last section we consider Problem \ref{mainprob} for the rational function $f(x,y)=(y^2-x^3)/x$ from a computational point of view. We present results of various computer experiments and formulate several questions, problems and conjectures which may stimulate further research .

During our computational experiments we extensively used two computational packages: the latest version of PARI/GP \cite{pari} (in case of rank computations) \cite{pari} and Mathematica 13.2 (for symbolic computations and data manipulation) \cite{Wol}.

\section{Basic results}\label{sec2}

In this section we present certain classes of rational functions such that for each element $f$ of an appropriate class there are values $a, Q\in\Q$ such that $\cal{G}(a, Q)\subset \cal{V}_{f}$.

\begin{thm}\label{class1}
Let $g_{1}, g_{2}, h_{1}, h_{2}\in\Q(y)$ and suppose that $g_{1}h_{2}\neq h_{1}g_{2}$ in $\Q(y)$. Let
$$
f(x,y)=\frac{xg_{1}(y)+g_{2}(y)}{xh_{1}(y)+h_{2}(y)}.
$$
Then, for any $a\in\Q\setminus\{0\}$ and $Q\neq \pm 1$ we have $\cal{G}(a, Q)\subset \cal{V}_{f}$.
\end{thm}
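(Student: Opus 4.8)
The plan is to freeze the second variable at a well-chosen rational value $v$ and exploit that, for all but finitely many $v$, the one-variable map $x\mapsto f(x,v)$ is a non-degenerate linear fractional transformation with rational coefficients. So first I would pick a target $t\in\Q$ and solve $f(x,v)=t$ for $x$: clearing denominators gives $x(g_{1}(v)-t\,h_{1}(v))=t\,h_{2}(v)-g_{2}(v)$, so whenever $g_{1}(v)-t\,h_{1}(v)\neq 0$ we may set
\[
x=\frac{t\,h_{2}(v)-g_{2}(v)}{g_{1}(v)-t\,h_{1}(v)}\in\Q .
\]
For this point $(x,v)$ the value of $f$ is genuinely determined, since a one-line computation shows the denominator $x\,h_{1}(v)+h_{2}(v)$ equals $(g_{1}(v)h_{2}(v)-g_{2}(v)h_{1}(v))/(g_{1}(v)-t\,h_{1}(v))$, which is nonzero as soon as $g_{1}(v)h_{2}(v)\neq g_{2}(v)h_{1}(v)$. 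As $g_{1}h_{2}-g_{2}h_{1}$ is a nonzero element of $\Q(y)$ by hypothesis, all but finitely many $v\in\Q$ avoid its zeros and the poles of $g_{1},g_{2},h_{1},h_{2}$; for any such $v$ that also satisfies $g_{1}(v)\neq t\,h_{1}(v)$ we get $t\in\cal{V}_{f}$.

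Next I would feed in $t=aQ^{i}$ for each $i\in\N$ in turn. For a fixed $i$, the element $g_{1}(y)-aQ^{i}h_{1}(y)$ of $\Q(y)$ is identically zero only when $g_{1}/h_{1}$ equals the constant $aQ^{i}$ (it cannot be that $g_{1}\equiv h_{1}\equiv 0$, as this would force $g_{1}h_{2}-g_{2}h_{1}\equiv 0$). So, unless $g_{1}/h_{1}$ happens to equal $aQ^{i}$, all but finitely many rational $v$ are admissible in the first paragraph, giving $aQ^{i}\in\cal{V}_{f}$. Since $a\neq 0$ and $Q\notin\{-1,0,1\}$, the terms $aQ^{i}$ are pairwise distinct, so the coincidence ``$g_{1}/h_{1}$ is the constant $aQ^{i}$'' can happen for at most one index $i$; every other index is covered already.

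It then remains to handle that single exceptional index, i.e.\ the case $g_{1}=c\,h_{1}$ in $\Q(y)$ for a constant $c$ coinciding with some $aQ^{i}$ (so $h_{1}\not\equiv 0$). Here $g_{1}h_{2}-g_{2}h_{1}=h_{1}(c\,h_{2}-g_{2})$, so the hypothesis forces $c\,h_{2}-g_{2}\not\equiv 0$, while the equation $f(x,y)=c$ collapses, independently of $x$, to $g_{2}(y)=c\,h_{2}(y)$. Hence $c\in\cal{V}_{f}$ precisely when $g_{2}-c\,h_{2}$ has a rational zero $v$ at which $h_{1}$ and $h_{2}$ do not both vanish (for then $f(\cdot,v)\equiv c$); this is the step that requires a dedicated check, and combining it with the previous paragraph gives $\cal{G}(a,Q)\subset\cal{V}_{f}$.

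I expect essentially all of the difficulty to sit in that last step. The rest is the soft observation that a non-degenerate linear fractional transformation over $\Q$ omits at most one affine value, plus the fact that the omitted value $g_{1}(v)/h_{1}(v)$ moves with $v$ — so only the degenerate configuration where $g_{1}/h_{1}$ is a constant $c$, in which every $f(\cdot,v)$ omits the same value $c$, calls for real work (and, as the case analysis shows, a little extra care about the value $a$ relative to $c$). Everything upstream of that is routine bookkeeping with the finitely many poles of $g_{1},g_{2},h_{1},h_{2}$ and zeros of $g_{1}h_{2}-g_{2}h_{1}$.
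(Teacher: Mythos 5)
Your main argument is exactly the paper's: freeze $y=v$, note that $x\mapsto f(x,v)$ is a linear fractional map, solve $f(x,v)=aQ^{i}$ linearly for $x$, and use $g_{1}h_{2}-g_{2}h_{1}\not\equiv 0$ to guarantee that the denominator of $f$ is nonzero at the resulting point (your computation that it equals $(g_{1}(v)h_{2}(v)-g_{2}(v)h_{1}(v))/(g_{1}(v)-th_{1}(v))$ is correct, and the paper omits even this verification). Where you go beyond the paper is in isolating the degenerate configuration: the paper simply asserts that ``from our assumptions'' one can choose $y_{i}$ with $aQ^{i}h_{1}(y_{i})-g_{1}(y_{i})\neq 0$, which is only possible when $g_{1}-aQ^{i}h_{1}$ is not the zero element of $\Q(y)$. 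You correctly observe that this can fail, for at most one index $i$, exactly when $g_{1}/h_{1}$ is the constant $aQ^{i}$.

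However, you leave that exceptional index as ``a dedicated check'', and no such check can succeed in general: the statement is false there. Take $g_{1}=h_{1}=1$, $h_{2}=0$, $g_{2}=y^{2}+1$, so that $f(x,y)=(x+y^{2}+1)/x$ and $g_{1}h_{2}-g_{2}h_{1}=-(y^{2}+1)\neq 0$ in $\Q(y)$. The value $1$ is attained only if $y^{2}+1=0$, so $1\notin\cal{V}_{f}$ and hence $\cal{G}(1,Q)\not\subset\cal{V}_{f}$ for every $Q$, contradicting the theorem with $a=1$. So your proof is incomplete precisely where you flagged it, but the fault lies with the statement (and with the paper's proof, which passes over this case silently): one must add the hypothesis that $g_{1}-aQ^{i}h_{1}\not\equiv 0$ for every $i$ (equivalently, that $g_{1}/h_{1}$ is not a constant belonging to $\cal{G}(a,Q)$), or else supply the condition from your last paragraph that $g_{2}-ch_{2}$ has a suitable rational zero, which need not exist. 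Under such an extra hypothesis your first two paragraphs already constitute a complete proof, identical in substance to the paper's.
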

\begin{proof}
Let $a, Q$ be given. For given $i\in\N$ we need to find rational numbers $x_{i}, y_{i}$ such that $f(x_{i}, y_{i})=aQ^{i}$ for $i=0, 1, \ldots$. From our assumptions, for any $i\in\N$, we can chose $y_{i}\in\Q$ such that $aQ^{i}h_{1}(y_{i})-g_{1}(y_{i})\neq 0$. Then, it is enough to take
$$
x_{i}=\frac{g_{2}(y_{i})-aQ^{i}h_{2}(y_{i})}{aQ^{i}h_{1}(y_{i})-g_{1}(y_{i})}
$$
and note that $f(x_{i}, y_{i})=aQ^{i}$ for $i\in\N$.
\end{proof}

\begin{thm}\label{class2}
Let $f(x,y)=ux^2+vxy+wy^2\in\Q[x,y]\setminus\{0\}$. There are infinitely many pairs $a, Q\in\Q$ such that $\cal{G}(a, Q)\subset \cal{V}_{f}$ is infinite.
\end{thm}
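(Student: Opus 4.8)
The plan is to use nothing more than the homogeneity of $f$. Since $f$ is a binary form of degree $2$, we have the identity $f(tx,ty)=t^{2}f(x,y)$ for all $t\in\Q$, and consequently the value set $\cal{V}_{f}$ is stable under multiplication by squares of rationals: if $c\in\cal{V}_{f}$ then $t^{2}c\in\cal{V}_{f}$ for every $t\in\Q$. The idea is therefore to take $a$ to be any nonzero value of $f$ and $Q$ to be any rational square different from $1$; then every term $aQ^{i}$ of the progression is a square multiple of $a$ and hence automatically lies in $\cal{V}_{f}$.

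First I would check that $f$ does take a nonzero value. As $f\in\Q[x,y]\setminus\{0\}$, at least one of $u,v,w$ is nonzero, and then one of $f(1,0)=u$, $f(0,1)=w$, $f(1,1)=u+v+w$ is nonzero (indeed, if $u=w=0$ then $f(1,1)=v\neq 0$). Fix $(x_{0},y_{0})\in\Q^{2}$ with $a_{0}:=f(x_{0},y_{0})\neq 0$. Next, for each rational $q\notin\{0,1,-1\}$ set $Q=q^{2}$ and $a=a_{0}$. Then $Q\neq\pm 1$ and $aQ(Q^{2}-1)\neq 0$, so $\cal{G}(a,Q)$ is an infinite set; moreover for every $i\in\N$,
$$
aQ^{i}=a_{0}(q^{i})^{2}=f(q^{i}x_{0},\,q^{i}y_{0})\in\cal{V}_{f},
$$
so $\cal{G}(a_{0},q^{2})\subset\cal{V}_{f}$. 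Letting $q$ range over infinitely many rationals (or, if one prefers to vary the first coordinate as well, replacing $a_{0}$ by $a_{0}r^{2}$ for $r\in\Q\setminus\{0\}$) yields infinitely many admissible pairs $(a,Q)$, which is the assertion.

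There is essentially no hard step here: the only points needing a remark are the existence of a nonzero value of $f$ (handled above) and the requirement $Q\neq\pm 1$ ensuring $\cal{G}(a,Q)$ is genuinely infinite, which is automatic since $Q=q^{2}>0$ and $q\neq\pm 1$. It is worth pointing out, though, that every pair produced by this argument is of a "trivial" type in that $Q$ is a perfect square; whether non-square values of $Q$ can occur — the genuinely interesting case in the spirit of the weighted-homogeneous dichotomy discussed below — is not touched by this reasoning and depends on finer arithmetic of the particular form $f$.
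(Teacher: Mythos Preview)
Your proof is correct and establishes the theorem as stated. It is, however, genuinely different from the paper's argument. You exploit only the degree-$2$ homogeneity of $f$, which forces $Q$ to be a rational square; in the terminology introduced just after this theorem, every pair $(a,Q)$ you produce is a \emph{trivial} solution. The paper instead completes the square to reduce to the form $F(X,Y)=X^{2}+dY^{2}$ and then uses the Brahmagupta--Fibonacci multiplicativity $F(r,s)F(u,v)\in\cal{V}_{F}$ to take $a=F(r,s)$ and $Q=F(u,v)$. This yields values of $Q$ that are typically \emph{not} perfect squares (e.g.\ $Q=2$ when $d=1$), so the paper's method already manufactures proper solutions and thus says something strictly stronger than the bare statement, anticipating the trivial/non-trivial dichotomy of Definition~2.8. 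Your approach is shorter and needs no arithmetic of quadratic forms, and you rightly flag its limitation in your closing remark; the paper's approach costs a little more but delivers the qualitatively interesting case.
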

\begin{proof}
Multiplying $f$ by $4u$ we get that $4uf(x,y)=(2ux+vy)^2+(4uw-v^2)y^2$. We substitute $a=4ua'$ and write $X=2ux+vy, Y=y, d=4uw-v^2$. Thus, the problem of finding $a, Q\in\Q$ such that $\cal{G}(a, Q)\subset \cal{V}_{f}$ is equivalent with the problem of finding $a', Q\in\Q$ such that $\cal{G}(a', Q)\subset \cal{V}_{F}$, where $F(x,y)=x^2+dy^2$. To get the result we take $a'=r^2+ds^2, Q=u^2+dv^2$, where $r, s, u, v\in\Q$ are chosen in such a way that $rsuvF(r, s)F(u, v)\neq 0$. Then, noting that the form $F$ is multiplicative, we get the result.
\end{proof}

\begin{thm}\label{class3}
Let $f(x,y)=\frac{f_{1}(x,y)}{f_{2}(x,y)}$, where $f_{1}, f_{2}\in\Z[x,y]$ are homogenous forms of degrees $d_{1}, d_{2}$, respectively. Let us assume that $|d_{1}-d_{2}|=1$. Then, for any $Q\in\Q\setminus\{-1,0, 1\}$ we have $\cal{G}(a, Q)\subset \cal{V}_{f}$ if and only $a=f(u, v)$ for some $u, v\in\Q$.
\end{thm}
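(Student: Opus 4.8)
The plan is to reduce the statement to a single scaling identity coming from homogeneity. For every $\lambda\in\Q\setminus\{0\}$ and every $(x,y)$ with $f_{2}(x,y)\neq0$ one has
$$
f(\lambda x,\lambda y)=\frac{f_{1}(\lambda x,\lambda y)}{f_{2}(\lambda x,\lambda y)}=\frac{\lambda^{d_{1}}f_{1}(x,y)}{\lambda^{d_{2}}f_{2}(x,y)}=\lambda^{d_{1}-d_{2}}f(x,y).
$$
Writing $\eps=d_{1}-d_{2}$, the hypothesis $|d_{1}-d_{2}|=1$ says exactly that $\eps\in\{-1,1\}$, so $f$ rescales like a homogeneous function of degree $\eps$. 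Everything else is bookkeeping with this relation.

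The ``only if'' direction is immediate: since $a=aQ^{0}\in\cal{G}(a,Q)\subset\cal{V}_{f}$, there are $u,v\in\Q$ with $f_{2}(u,v)\neq0$ and $a=f(u,v)$. For the ``if'' direction, I would assume $a=f(u,v)$ for some such $u,v$ (as $a\neq0$, this forces $f_{1}(u,v),f_{2}(u,v)\neq0$), fix $Q\in\Q\setminus\{-1,0,1\}$, and for each $i\in\N$ set $\lambda_{i}=Q^{\eps i}\in\Q\setminus\{0\}$ and take $x_{i}=\lambda_{i}u$, $y_{i}=\lambda_{i}v$. Then $f_{2}(x_{i},y_{i})=\lambda_{i}^{d_{2}}f_{2}(u,v)\neq0$, so $f(x_{i},y_{i})$ is determined, and the scaling identity gives
$$
f(x_{i},y_{i})=\lambda_{i}^{\eps}f(u,v)=Q^{\eps^{2}i}a=aQ^{i}.
$$
Concretely this is the choice $(x_{i},y_{i})=(Q^{i}u,Q^{i}v)$ when $d_{1}>d_{2}$ and $(x_{i},y_{i})=(Q^{-i}u,Q^{-i}v)$ when $d_{1}<d_{2}$. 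Hence $aQ^{i}\in\cal{V}_{f}$ for all $i\in\N$, i.e. $\cal{G}(a,Q)\subset\cal{V}_{f}$.

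The argument is short and I do not anticipate a genuine obstacle; the only points needing (routine) care are that the rescaled arguments keep $f_{2}$ from vanishing, which is guaranteed by $Q\neq0$ and $f_{2}(u,v)\neq0$, and the elementary bookkeeping $\eps^{2}=1$ ensuring $\lambda_{i}^{\eps}=Q^{i}$. It is worth emphasising that the hypothesis $|d_{1}-d_{2}|=1$ is precisely what lets one reach the entire progression $\{aQ^{i}\}_{i\in\N}$ by rescaling a \emph{single} point $(u,v)$: when $|d_{1}-d_{2}|\ge2$ the same homogeneity only produces $aQ^{i}$ with $i$ in an arithmetic progression (or forces extraction of roots), and when $d_{1}=d_{2}$ the function is scale-invariant so the trick yields nothing nontrivial. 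The restriction $Q\notin\{-1,0,1\}$ merely ensures that $\cal{G}(a,Q)$ is a genuinely infinite set.
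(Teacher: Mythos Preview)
Your proof is correct and follows essentially the same approach as the paper: both arguments exploit the homogeneity identity $f(\lambda x,\lambda y)=\lambda^{d_1-d_2}f(x,y)$ and take $(x_i,y_i)=(Q^{\pm i}u,\,Q^{\pm i}v)$ to hit every term $aQ^i$. The only cosmetic difference is that the paper reduces without loss of generality to $d_1-d_2=1$, whereas you treat both signs of $\eps$ uniformly via $\lambda_i=Q^{\eps i}$ and $\eps^2=1$; you are also a bit more explicit about checking that $f_2$ does not vanish at the rescaled points.
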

\begin{proof}
Without loss of generality we can assume that $d_{1}-d_{2}=1$. If $\cal{G}(a, Q)\subset \cal{V}_{f}$, then for some $u, v\in\Q$ we have $a=f(u, v)$. To get the implication in the other side, we take $x_{i}=uQ^{i}, y_{i}=vQ^{i}$, where $u, v$ are rational parameters. Then we have
$$
f(x_{i}, y_{i})=\frac{f_{1}(uQ^{i}, vQ^{i})}{f_{2}(uQ^{i}, vQ^{i})}=\frac{Q^{d_{1}i}f_{1}(u, v)}{Q^{d_{2}i}f_{2}(u, v)}=Q^{i}f(u,v).
$$
Thus, if $a=f(u, v)$ for some $u, v\in\Q$ such that $f(u, v)\neq 0$ we have $f(x_{i}, y_{i})=aQ^{i}$ for each $i\in\N$. Consequently $\cal{G}(a, Q)\subset \cal{V}_{f}$.
\end{proof}

\begin{rem}\label{rem1}
{\rm Theorem \ref{class3} is a special (easily) provable part of the following general observation. Suppose that $f\in\Q(x, y)$ is weighted homogeneous of degree $d$, i.e., there is a pair $(w_{1}, w_{2})$ of integers, called weights, with such a property that for a variable $\lambda$, we have $f(\lambda^{w_{1}}x, \lambda^{w_{2}}y)=\lambda^{d} f(x,y)$. Thus, to find $a, Q\in\Q$ such that $\cal{G}(a, Q)\subset \cal{V}_{f}$ it is enough to prove that we have $\{a, aQ, \ldots, aQ^{d-1}\}\subset \cal{V}_{f}$. This is clear, if $f(x_{i}, y_{i})=aQ^{i}$ for $i=0, 1, \ldots, d-1$, then $f(x_{i}Q^{w_{1}j}, y_{i}Q^{w_{2}j})=aQ^{i+dj}$ for $j\in\Z$ and thus $\cal{G}(a, Q)\subset\cal{V}_{f}$.

Note that a rational function $f$ is homogenous of degree $d$ if and only if is weighted homogenous of degree $d$ with weights $(w_{1}, w_{2})=(1, 1)$.
}
\end{rem}

\begin{exam}
{\rm Let $f(x, y)=x^3+y^3$. We find that $\cal{G}(13797, 2)\subset \cal{V}_{f}$ which follows from observation made in Remark \ref{rem1} and the equalities
$$
f(-3, 24)=13797,\quad f(-13, 31)=2\cdot 13797,\quad  f(-33, 45)=2^2\cdot  13797.
$$

Let us also note that the problem of finding $a, Q$ such that $\cal{G}(a, Q)\subset \cal{V}_{f}$ can be related to the Problem \ref{mainprob} with different rational function $f$. Indeed, let us recall that the curve $C:\;x^3+y^3=A$ is birationally equivalent with the elliptic curve in Weierstrass form $E:\;y^2=x^3-432A^2$. We have the maps
\begin{align*}
\phi:&\;C\ni (x,y)\mapsto \left(\frac{36 A+y}{6 x},\frac{36 A-y}{6 x}\right)\in E,\\
\psi:&\;E\ni (x,y)\mapsto \left(\frac{12 A}{x+y},\frac{36 A (y-x)}{x+y}\right)\in C.
\end{align*}
In other words, if $g(x,y)=y^2-x^3$, then we have
$$
\cal{G}(a, Q)\subset \cal{V}_{f} \Longleftrightarrow  \cal{G}(-432a^2, Q^2)\subset \cal{V}_{g}.
$$

Let us consider $a=6, Q=7$, i.e., for $i=0, 1, 2$ we consider the curve $E_{i}:\;y^2=x^3-432\cdot 6^2\cdot 7^{2i}$. One can check that the point $S_{i}$ lies on the curve $E_{i}$ for $i=0, 1, 2$, where
$$
S_{0}=(28, 80), \;S_{1}=(172, 2080),\;S_{2}=(2353, 113975).
$$
Then, we have $x_{i}^3+y_{i}^3=6\cdot 7^{i}$ for the corresponding points $P_{i}=\psi(S_{i})=(x_{i}, y_{i})$, where
$$
P_{0}=\left(\frac{37}{21},\frac{17}{21}\right),\; P_{1}=\left(\frac{449}{129},-\frac{71}{129}\right),\; P_{2}=\left(\frac{124559}{14118},-\frac{103391}{14118}\right).
$$

As a consequence we get the following

\begin{cor}\label{cor1}
Let $\cal{C}=\{x^3+y^3:\;x, y\in\Z\}\subset \Z$. The set $\cal{C}$ contains arbitrarily long geometric progressions.
\end{cor}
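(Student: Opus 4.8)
The plan is to produce a single \emph{infinite} geometric progression contained in $\cal{C}$; since this obviously contains arbitrarily long finite progressions, it proves the Corollary (and a bit more). The mechanism is precisely the one in Remark \ref{rem1}: the polynomial $f(x,y)=x^{3}+y^{3}$ is homogeneous of degree $3$, hence weighted homogeneous with weights $(w_{1},w_{2})=(1,1)$ and $d=3$. Therefore it suffices to realise the three initial terms $a,\,aQ,\,aQ^{2}$ as values of $f$ at \emph{integer} points, because the rescaling $(x,y)\mapsto(Q^{j}x,Q^{j}y)$ keeps us inside $\Z$ whenever $Q\in\Z$ and multiplies the value by $Q^{3j}$, thereby filling in all higher terms of the progression.

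Concretely, I would take $a=13797$ and $Q=2$ and use the three identities already recorded in the Example, namely
$$
f(-3,24)=a,\qquad f(-13,31)=aQ,\qquad f(-33,45)=aQ^{2},
$$
first checking that these hold as genuine identities over $\Z$. Then, given an arbitrary $i\in\N$, I would write $i=3j+r$ with $j\geq 0$ and $r\in\{0,1,2\}$, take $(x,y)\in\Z^{2}$ to be the point appearing in the $r$-th identity above, and compute $f(2^{j}x,2^{j}y)=2^{3j}f(x,y)=Q^{3j}\cdot aQ^{r}=aQ^{i}$, noting that $2^{j}x,2^{j}y\in\Z$. This shows $aQ^{i}\in\cal{C}$ for every $i\in\N$, i.e. $\cal{G}(13797,2)\subset\cal{C}$, which finishes the proof.

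I do not expect a genuine obstacle here: once integer data for the first three terms with consecutive exponents is in hand, the scaling argument is routine, and the only nontrivial ingredient is exhibiting those three simultaneous representations, which is exactly what the short computer search behind the Example provides. An alternative way to phrase the whole argument, avoiding explicit reference to Remark \ref{rem1}, is to observe that $\cal{C}$ is closed under multiplication by perfect cubes; consequently it is enough to find three integers of the form $aQ^{0},aQ^{1},aQ^{2}$ that are simultaneously sums of two integer cubes, and the displayed triple does precisely this.
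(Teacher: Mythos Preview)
Your proof is correct and in fact proves a slightly stronger statement than the paper does: you exhibit a single \emph{infinite} geometric progression $\cal{G}(13797,2)\subset\cal{C}$, whereas the paper's proof only produces, for each $n$, a finite progression of length $n+1$ (with initial term depending on $n$).

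The difference in approach is this. The paper's proof works from the second part of the Example, namely $\cal{G}(6,7)\subset\cal{V}_{f}$, where the witnessing points $P_{i}=(x_{i},y_{i})$ are genuinely \emph{rational} and not integral. It then chooses, for given $n$, a common denominator $D_{n}$ clearing all of $x_{0},y_{0},\ldots,x_{n},y_{n}$, and observes that $(D_{n}x_{i})^{3}+(D_{n}y_{i})^{3}=6D_{n}^{3}\cdot 7^{i}$ for $i=0,\ldots,n$; since $D_{n}$ depends on $n$, this yields only arbitrarily long finite progressions in $\cal{C}$. You instead exploit the \emph{first} part of the Example, where the three base points for $(a,Q)=(13797,2)$ are already in $\Z^{2}$, so the homogeneity scaling $(x,y)\mapsto(2^{j}x,2^{j}y)$ stays in $\Z^{2}$ and no denominator-clearing is needed. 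Your route is shorter and yields more; the paper's route has the merit of illustrating a template that would still work when only rational (non-integral) solutions are available, as is typically the case when the base points come from rank computations on the associated elliptic curves.
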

\begin{proof}
Let $n\in\N_{+}$ be given and $f(x,y)=x^3+y^3$. We proved that the set $\cal{G}(6, 7)\subset \cal{V}_{f}$. Let $x_{i}, y_{i}\in\Q\setminus\{0\}$ satisfy $f(x_{i}, y_{i})=6\cdot 7^{i}$ for $i\in\N$. Let $D_{n}$ be a non-zero integer such that $D_{n}\prod_{i=0}^{n}(x_{i}y_{i})\in\Z$. Then the numbers $X_{i}=D_{n}x_{i}, Y_{i}=D_{n}y_{i}$ are integers and
we have
$$
f(X_{i}, Y_{i})=6D_{n}^37^{i}\quad\mbox{for}\quad i=0, 1, \ldots, n,
$$
i.e., the set $\cal{C}$ contains geometric progression $\{6D_{n}^37^{i}:\;i=0,\ldots, n\}$ and the result follows.
\end{proof}

In the light of the proof of Corollary \ref{cor1} on can ask the following

\begin{ques}
Do there exist integers $a, Q$ such that for each $i\in\N$ there is a co-prime solution in integers $x, y$ of the Diophantine equation $x^3+y^3=aQ^{i}$?
\end{ques}

}
\end{exam}

We should also note that in the case of $f\in\Q(x,y)$ which is weighted homogenous of degree $d$ with weights $(w_{1}, w_{2})$, the Problem \ref{mainprob} has a trivial solution. Indeed, for each $Q\in\Q, Q(Q^2-1)\neq 0$, and any $u, v\in\Q$ such that $f(u,v)\neq 0$, we have $\cal{G}(f(u,v), Q^{d})\subset \cal{V}_{f}$. This is simple consequence of the identity
$$
f(uQ^{iw_{1}},vQ^{iw_{2}})=f(u,v)Q^{id}.
$$
Thus, in case of weighted homogenous rational function $f$ of degree $d$, we need to consider various notions of solutions of Problem \ref{mainprob}.
\begin{defi}
Let $f\in\Q(x, y)$ be weighted homogenous of degree $d$. We say that Problem \ref{mainprob} has
\begin{enumerate}
\item a trivial solution $(a, Q)$ if and only if $Q$ is a $d$-th power of a rational number and $\cal{G}(a, Q)\subset \cal{V}_{f}$;
\item a non-trivial solution $(a, Q)$ of level $e$ if and only if $e|d, e<d, Q$ is $e$-th power of a rational number, there is at least one prime $q$ such that the $q$-adic valuation of $Q$ is equal to $e$ (or $-e$), and $\cal{G}(a, Q)\subset \cal{V}_{f}$;
\item a proper solution $(a, Q)$ if and only if $(a, Q)$ is a non-trivial solution of level $1$.
\end{enumerate}
\end{defi}

We already observed that for each weighted homogenous rational function $f$, Problem \ref{mainprob} has a trivial solution. On the other hand, for each $f$ from the class of homogenous rational functions of degree 1, we know, via Theorem \ref{class3}, that Problem \ref{mainprob} has a proper solution. The same is true for the function $f(x, y)=x^3+y^3$. In the next section we will investigate the existence and properties of proper solutions of Problem \ref{mainprob} for $f(x, y)=(y^2-x^3)/x$. However, before we do that we note the following.



\begin{thm}\label{bihomo}
Let $d\in\N$ be odd.
\begin{enumerate}
\item Let $f(x,y)=(y^2-x^d)/x$. For each $Q\in\Q\setminus\{-1\}$ which is not a square, there are infinitely many values of $a\in\Z$ such that $(a, Q^{d-1})$ is a non-trivial solution of level $d-1$ of  Problem \ref{mainprob}.
\item Let $f(x,y)=y^2-x^d$. For each $Q\in\Q\setminus\{-1\}$ which is not a square, there are infinitely many values of $a\in\Z$ such that $(a, Q^d)$ is a non-trivial solution of level $d$ of  Problem \ref{mainprob}.
\end{enumerate}
\end{thm}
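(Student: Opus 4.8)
The plan is to handle both parts through the weighted‑homogeneity mechanism of Remark~\ref{rem1}, pushed to its sharpest form: in each part the quotient of the progression is a \emph{square} relative to the relevant weighted degree, which collapses the problem down to checking just \emph{two} consecutive terms. First I would record the weighted structure. In part (1), $f(x,y)=(y^{2}-x^{d})/x$ is weighted homogeneous of degree $D:=2(d-1)$ with weights $(w_{1},w_{2})=(2,d)$, since $f(\lambda^{2}x,\lambda^{d}y)=(\lambda^{2d}y^{2}-\lambda^{2d}x^{d})/(\lambda^{2}x)=\lambda^{2(d-1)}f(x,y)$; in part (2), $f(x,y)=y^{2}-x^{d}$ is weighted homogeneous of degree $D:=2d$ with the same weights. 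In either case the identity $t^{D}f(x,y)=f(t^{w_{1}}x,t^{w_{2}}y)$, valid for $t\in\Q^{*}$ wherever $f$ is defined, shows that $\cal{V}_{f}$ is stable under multiplication by $D$‑th powers of nonzero rationals. Writing $\widehat{Q}:=Q^{d-1}$ in part (1) and $\widehat{Q}:=Q^{d}$ in part (2), in both cases $\widehat{Q}^{2}=Q^{D}$ is a $D$‑th power, so $\cal{V}_{f}\cdot\widehat{Q}^{2k}\subseteq\cal{V}_{f}$ for all $k\ge 0$. Hence, once a value $a$ with $\{a,a\widehat{Q}\}\subset\cal{V}_{f}$ is found, every even‑index term $a\widehat{Q}^{2k}$ lies in $\cal{V}_{f}$ (coming from $a$) and every odd‑index term $a\widehat{Q}^{2k+1}$ lies in $\cal{V}_{f}$ (coming from $a\widehat{Q}$), so $\cal{G}(a,\widehat{Q})\subset\cal{V}_{f}$. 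The theorem thus reduces to producing infinitely many integers $a\ne 0$ with $a\in\cal{V}_{f}$ and $a\widehat{Q}\in\cal{V}_{f}$. (Note that the naive orbit $x_{i}=uQ^{2i},\,y_{i}=vQ^{di}$ only yields quotient $Q^{2(d-1)}$ resp.\ $Q^{2d}$, i.e.\ the \emph{trivial} solution of level $D$; the point of the statement is precisely to halve this.)

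Next I would carry out the two‑term construction. In part (1) I use that $d$ is odd to write $\widehat{Q}=Q^{d-1}=q^{2}$ with $q:=Q^{(d-1)/2}\in\Q$ (and $q^{2}\ne 1$ since $Q\ne\pm1$). Looking for $u,v,v'\in\Q$ with $u\ne 0$, $f(u,v)=a$, $f(u,v')=q^{2}a$, the second equation reads $v'^{2}-u^{d}=q^{2}(v^{2}-u^{d})$, i.e.\ $(v'-qv)(v'+qv)=u^{d}(1-q^{2})$; fixing $u=1$ and a parameter $s\in\Q^{*}$ and putting $v'+qv=s,\ v'-qv=(1-q^{2})/s$ gives $v=\tfrac{1}{2q}\bigl(s-(1-q^{2})/s\bigr)$, and then $a:=v^{2}-1=f(1,v)$ automatically satisfies $f(1,v')=q^{2}a=\widehat{Q}a$; all but finitely many $s$ make $a\ne 0$. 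In part (2) I instead want $u,v,v'$ with $v^{2}-u^{d}=a$ and $v'^{2}-u^{d}=Q^{d}a$, i.e.\ $v'^{2}=Q^{d}v^{2}+u^{d}(1-Q^{d})$; dividing by $v^{2}$ and writing $w:=u^{d}/v^{2}$ turns this into the \emph{linear} condition $(v'/v)^{2}=Q^{d}+w(1-Q^{d})$. Since $d$ is odd and $Q$ is not a square, $Q^{d}$ is not a square, so $\mu^{2}\ne Q^{d}$ for every $\mu\in\Q$; choosing any $\mu\in\Q\setminus\{\pm1\}$ and setting $w:=(\mu^{2}-Q^{d})/(1-Q^{d})$ (well defined since $Q^{d}\ne 1$, nonzero since $\mu^{2}\ne Q^{d}$, and $\ne 1$ since $\mu\ne\pm1$) makes the right‑hand side equal to $\mu^{2}$, and then $u:=w,\ v:=w^{(d-1)/2},\ v':=\mu v$ realize it with $a:=v^{2}-u^{d}=w^{d-1}(1-w)\ne 0$ and $f(u,v')=Q^{d}a$.

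Then I would pass from one solution to infinitely many integral ones. In either part there is now a fixed nonzero rational $a_{0}$ with $a_{0}=f(u_{0},v_{0})$ and $a_{0}\widehat{Q}=f(u_{0},v_{0}')$, the two having the same first coordinate. For every positive integer $n$, weighted homogeneity gives $n^{D}a_{0}=f(n^{w_{1}}u_{0},n^{w_{2}}v_{0})\in\cal{V}_{f}$ and $n^{D}a_{0}\widehat{Q}=f(n^{w_{1}}u_{0},n^{w_{2}}v_{0}')\in\cal{V}_{f}$. Letting $n$ run over the positive multiples of the denominator of $a_{0}$ produces infinitely many pairwise distinct integers $a=n^{D}a_{0}$ with $\{a,a\widehat{Q}\}\subset\cal{V}_{f}$, hence $\cal{G}(a,\widehat{Q})\subset\cal{V}_{f}$ by the reduction step. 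Since $a\ne 0$ and $\widehat{Q}\notin\{0,\pm1\}$ (because $Q\notin\{0,\pm1\}$), this is a genuine geometric progression; and checking the power/valuation conditions against the definition shows it is a non‑trivial solution of level $d-1$ (resp.\ $d$).

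I do not expect a deep obstruction here: the whole content sits in the first two steps. The essential point is to see that the hypotheses force the quotient to be a square relative to the weighted degree, which is exactly what permits the even/odd splitting and the reduction to two terms — and that the ``odd‑index'' term then demands a genuinely different device in each part: the factorization $(v'-qv)(v'+qv)=u^{d}(1-q^{2})$, available only because $d$ odd makes $Q^{d-1}$ a square, versus the linearizing substitution $w=u^{d}/v^{2}$ in part (2). Once that is in place, integrality of $a$ comes for free from the $D$‑th‑power stability of $\cal{V}_{f}$, and the only routine care needed is to ensure $a\ne 0$ and that genuinely infinitely many distinct integer values arise.
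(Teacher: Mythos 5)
Your proof is correct, and its skeleton coincides with the paper's: both arguments rest on the observation that $f$ is weighted homogeneous of degree $D$ ($=2(d-1)$, resp.\ $2d$) with weights $(2,d)$, so that the quotient $\widehat{Q}$ ($=Q^{d-1}$, resp.\ $Q^{d}$) satisfies $\widehat{Q}^{2}=Q^{D}$ and the whole progression collapses to the two conditions $a\in\cal{V}_{f}$ and $a\widehat{Q}\in\cal{V}_{f}$ --- this is exactly the even/odd propagation $X_{i}=x_{i\bmod 2}Q^{2\lfloor i/2\rfloor}$, $Y_{i}=y_{i\bmod 2}Q^{d\lfloor i/2\rfloor}$ appearing at the end of the paper's proof. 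Where you genuinely diverge is in solving the two-term equation $f(x_{1},y_{1})=\widehat{Q}f(x_{0},y_{0})$: the paper uses one uniform substitution $x_{i}=p_{i}T$, $y_{i}=q_{i}T^{(d-1)/2}$, which makes the equation linear in $T$ and yields a single closed formula covering both parts, whereas you use two different devices (the difference-of-squares factorization $(v'-qv)(v'+qv)=u^{d}(1-q^{2})$, available because $d$ odd makes $Q^{d-1}$ a square, in part (1); the linearization $w=u^{d}/v^{2}$ in part (2)). Both are valid rational parametrizations; the paper's is more economical, yours arguably more transparent about why each case works. You are also more careful than the paper on the final clause of the statement: the paper leaves the passage from one rational $a$ to infinitely many \emph{integer} values of $a$ implicit, while you make it explicit by rescaling $a\mapsto n^{D}a$ through the weight action (at the cost that your infinitely many $a$'s all differ by $D$-th powers, hence are ``equivalent'' solutions in the sense discussed at the start of Section 3 --- but the paper's parametric family is not shown to do better). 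One caveat you share with the paper: the definition of a level-$e$ solution demands a prime at which $\widehat{Q}$ has valuation exactly $\pm e$, which forces $Q$ to have a prime of valuation $\pm 1$; this does not follow from ``$Q$ is not a square'' (take $Q=8$), and neither proof checks it --- both really establish non-triviality only in the weaker sense that $\widehat{Q}$ is not a $D$-th power.
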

\begin{proof}
We start with the case $f(x,y)=(y^2-x^d)/x$. In this case $f$ is weighted homogenous of degree $2(d-1)$ with weights $(2, d)$.
Let us note that to get the statement $\cal{G}(a, Q^{d-1})\subset \cal{V}_{f}$ it is enough to prove that there exist rational solutions of the system of equations
$$
f(x_{0}, y_{0})=a,\quad f(x_{1}, y_{1})=aQ^{d-1}
$$
in variables $a, x_{i}, y_{i}, i=0, 1$. Equivalently, by eliminating $a$, we consider the equation
$$
f(x_{1}, y_{1})=f(x_{0}, y_{0})Q^{d-1}.
$$
It can be easily solved via the substitution $x_{i}=p_{i}T, y_{i}=q_{i}T^{m}$, where $m=(d-1)/2, p_{i}, q_{i}$ for $i=0, 1$ are rational variables and we look for an appropriate specialization of $T$. Indeed, after the substitution, our equation reduces to the equation (in one variable $T$) of the form
$$
T^{d}[p_{0}p_{1}(p_{1}^{d-1}-p_{0}^{d-1}Q^{d-1})T+p_{1}q_{0}^2Q^{d-1}-p_{0}q_{1}^2]=0.
$$
Solving the equation defined by the second factor, we get an appropriate value of $T$, together with the corresponding value of $a$ in the following form
$$
T=\frac{p_{0}q_{1}^2-p_{1}q_{0}^2Q^{d-1}}{p_{0}p_{1}(p_{1}^{d-1}-p_{0}^{d-1}Q^{d-1})}, \quad a=f(x_{0}, y_{0})=f(p_{0}T, q_{0}T^{m})=\frac{T^{d-2}(q_{0}^2-p_{0}^{d}T)}{p_{0}}.
$$
As a consequence of our reasoning we see that by taking $X_{i}=x_{i\pmod*{2}}Q^{2\lfloor\frac{i}{2}\rfloor}, Y_{i}=x_{i\pmod*{2}}Q^{d\lfloor\frac{i}{2}\rfloor}$ we get $f(X_{i}, Y_{i})=aQ^{(d-1)i}$ for $i\in\N$, and thus $\cal{G}(a, Q^{d-1})\subset \cal{V}_{f}$. If $Q$ is not a square then our solution is non-trivial.

\bigskip

To get the statement for $f(x,y)=y^2-x^d$ we note that $f$ is weighted homogenous of degree $2d$ with weights $(2, d)$. It is clear that to prove that $\cal{G}(a, Q^{d})\subset\cal{V}_{f}$ it is enough to solve the system
$$
f(x_{0}, y_{0})=a,\quad f(x_{1}, y_{1})=aQ^{d}.
$$
or equivalently, the equation $f(x_{1}, y_{1})=f(x_{0}, y_{0})Q^{d}$. Using the same substitutions and the same reasoning as in the case of the function $(y^2-x^d)/x$ we get the corresponding values of $T$ and $a$ in the following form
$$
T=\frac{q_{0}^2-Q^dq_{1}^2}{p_{0}^d-Q^{d}p_{1}^d},\quad a=f(x_{0}, y_{0})=f(p_{0}T, q_{0}T^{m})=T^{d-1}(q_{0}^2-p_{0}^{d}T).
$$
\end{proof}

\section{Geometric progressions in the set $\cal{V}_{f}$ for $f(x,y)=\frac{y^2-x^3}{x}$ and related elliptic curves}\label{sec3}

In the previous section we obtained some general result concerning Problem \ref{mainprob}. Now we concentrate on a specific rational function. Let us put
$$
f(x,y)=\frac{y^2-x^3}{x}.
$$
We know that $f$ is weighted-homogenous of degree 4 with weights $(2, 3)$ and from the first part of  Theorem \ref{bihomo}, for each $Q\in\Q$ which is not a square we have $\cal{G}(a, Q^{2})\subset \cal{V}_{f}$ non-trivially. Before we will go on, let us note that without loss of generality one can assume that $a, Q\in\Z$. Indeed, if $a=p/q, Q=u/v$, then by multiplication of the equality $y^2=x^3+aQ^{i}x$ by $q^6v^{6i}$ we easily get the equivalence
$$
\cal{G}(a, Q)\subset \cal{V}_{f}\;\Longleftrightarrow\;\cal{G}(pq^{3}, uv^{3})\subset \cal{V}_{f}.
$$
Moreover, let us note that for any $t\in\Q$ we have: $(a, Q)$ is a solution of Problem \ref{mainprob} if and only if $(at^{4}, Q)$ is a solution of Problem \ref{mainprob}.

Although the choice of $f$ may be seen arbitrary, it is not. This follows from the fact that for any given $A\neq 0$ the curve $f(x,y)=A$ is an elliptic curve, and the existence of $a, Q$ such that $\cal{G}(a, Q)\subset \cal{V}_{f}$ is equivalent with the property that for each $i\in\N$, the elliptic curve
$$
E_{i}(a, Q):\;y^2=x^3+aQ^{i}x
$$
contains rational point with $x\neq 0$. Actually, in the sequel we require that the curve has a positive rank. This connection with elliptic curves suggest several interesting questions.


It is not difficult to find an example of rational numbers $a, Q$ such that $\cal{G}(a, Q)$ is infinite and $\cal{G}(a, Q)\subset \cal{V}_{f}$. Indeed, let us take $Q=2$ and $a=47$. We thus consider elliptic curves $E_{i}(47, 2):\; y^2=x^3+47\cdot 2^{i}x$. One can compute the rank of $E_{i}(47, 2)$ and the set of generators of infinite order. We collect the computed data in the table below.
\begin{equation*}
\begin{array}{|c|c|l|}
\hline
  i & r(E_{i}(47, 2)) & \mbox{generators of}\;E_{i}(47, 2)(\Q) \\
\hline
  0 & 1 & (289/25, -5712/125) \\
  1 & 2 & (2, 14), (1504/81, 65800/729) \\
  2 & 1 & (18, 96)  \\
  3 & 1 & (4716544/18225, 10271916928/2460375) \\
\hline
\end{array}
\end{equation*}
\begin{center}
Table 1. Ranks and the generators for the elliptic curves $E_{i}(47, 2):\;y^2=x^3+47\cdot2^{i}x, i=0, 1, 2, 3$.
\end{center}

As a consequence we get that $\cal{G}(47, 2)\subset \cal{V}_{f}$. In the light of this result one can consider related problem. More precisely, let $a\in\Q\setminus\{0\}$ be given and assume that the elliptic curve $E:\;y^2=x^3+ax$ has a positive rank. Is it possible to find a $Q\in\Z\setminus\{-1,0,1\}$ which is fourth power-free and the elliptic curve $E_{i}(a, Q)$ has positive rank for $i=0, 1, 2, 3$? We prove the following:

\begin{thm}\label{fixeda}
Let $a\in\Z\setminus\{0\}$ and suppose that the elliptic curve $E:\;y^2=x^3+ax$ has a positive rank. Let us consider the elliptic curve
$$
\cal{E}_{a}(p, v):\;Y^2=X(X-f_{1}(a,p,v))(X-f_{2}(a,p,v))=:F(X),
$$
where
\begin{align*}
f_{1}&=ap^3(p^2+a)^3v^4,\\
f_{2}&=a p (p^2+a) ((p^2+a)pv^2-1)((p^2+a)pv^2+1).
\end{align*}
If there are $p, v\in\Q$ such that $\cal{E}_{a}(p, v)$ has a positive rank, then there are infinitely many values of fourth power-free integers $Q$ which are not squares and such that $\cal{G}(a, Q)\subset \cal{V}_{f}$.
\end{thm}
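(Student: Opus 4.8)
My plan is to use the weighted homogeneity of $f$ to replace $\cal{G}(a,Q)\subset\cal{V}_{f}$ by four simultaneous conditions, dispose of one of them by the hypothesis on $E$, secure a second one for free by choosing $Q$ of a special shape, and reduce the remaining two to the existence of a rational point on $\cal{E}_{a}(p,v)$. Since $f(x,y)=(y^2-x^3)/x$ is weighted homogeneous of degree $4$ with weights $(2,3)$, Remark \ref{rem1} shows that $\cal{G}(a,Q)\subset\cal{V}_{f}$ is equivalent to $\{a,aQ,aQ^2,aQ^3\}\subset\cal{V}_{f}$; and for $A\neq 0$ one has $A\in\cal{V}_{f}$ exactly when the curve $y^2=x^3+Ax$ carries a rational point with $x\neq 0$. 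So it suffices to exhibit one $Q$ for which each of $E_{j}(a,Q)\colon y^2=x^3+aQ^{j}x$, $j=0,1,2,3$, has such a point. For $j=0$ this is immediate, since $E_{0}(a,Q)=E$ has positive rank and a point of infinite order on $E$ has $x\neq 0$ (the only point of $E$ with zero abscissa being the $2$-torsion point $(0,0)$).

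Next I pick $Q$ so that $E_{2}(a,Q)$, the quadratic twist of $E$ by $Q$, acquires a suitable point automatically. The substitution $x=Qu$, $y=Qt$ turns $E_{2}(a,Q)$ into $t^2=Qu(u^2+a)=Q\phi_{a}(u)$, where $\phi_{a}(u):=u^3+au$, so it is enough that $Q\phi_{a}(p)$ be a nonzero square for some $p\in\Q$. Taking $Q=\phi_{a}(p)r^2$ ($p,r\in\Q$) achieves this, with the explicit point $x=Qp$; moreover this $Q$ is a non-square whenever $\phi_{a}(p)$ is, the generic situation. Since multiplying $Q$ by a nonzero fourth power leaves every condition $aQ^{j}\in\cal{V}_{f}$ unchanged (and, by the reduction recorded just before the theorem, also permits $Q\in\Z$), I may pass at the end to a fourth power-free integer representative of $Q$.

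The main point is to force the quartic twists $E_{1}(a,Q)\colon y^2=x^3+aQx$ and $E_{3}(a,Q)\colon y^2=x^3+aQ^3x$ to carry rational points with $x\neq 0$ for this same $Q$; note that $E_{3}(a,Q)$ is the quadratic twist of $E_{1}(a,Q)$ by $Q$, and for non-square $Q$ the two are no longer interchangeable by a rational substitution, so both must be arranged simultaneously. The construction should be explicit: write down a candidate point on $E_{1}(a,Q)$ and one on $E_{3}(a,Q)$ with coordinates rational in $p,v,r$, substitute $Q=\phi_{a}(p)r^2$, and impose compatibility by eliminating $r$; after clearing denominators and passing to a Weierstrass model, the compatibility condition should become precisely $\cal{E}_{a}(p,v)\colon Y^2=X(X-f_{1})(X-f_{2})$ with the $f_{1},f_{2}$ of the statement — indeed $f_{1}-f_{2}=ap(p^2+a)$, so $0,f_{1},f_{2}$ are generically distinct and $\cal{E}_{a}(p,v)$ is an elliptic curve. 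Running this in reverse, each rational point $(X,Y)$ on $\cal{E}_{a}(p,v)$ with $Y\neq 0$ and $X\notin\{0,f_{1},f_{2}\}$ yields a rational $r$, hence a $Q=\phi_{a}(p)r^2$, together with explicit points having $x\neq 0$ on $E_{1}(a,Q)$ and $E_{3}(a,Q)$. Combined with the first two paragraphs this gives $\{a,aQ,aQ^2,aQ^3\}\subset\cal{V}_{f}$, i.e.\ $\cal{G}(a,Q)\subset\cal{V}_{f}$.

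Finally, suppose $\cal{E}_{a}(p,v)$ has positive rank, fix a point $R$ of infinite order, and apply the construction to the multiples $nR$, $n\ge 1$. The assignment $(X,Y)\mapsto Q$ is a non-constant rational function on $\cal{E}_{a}(p,v)$, and a standard argument (heights along $\{nR\}$, or primitive divisors of the elliptic divisibility sequence of $R$) shows that the values $Q(nR)$ include infinitely many pairwise inequivalent ones modulo fourth powers; discarding the finitely many $n$ with $Q\in\{-1,0,1\}$ and recalling that $Q$ is never a square, one obtains infinitely many fourth power-free non-square integers $Q$ with $\cal{G}(a,Q)\subset\cal{V}_{f}$. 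The step I expect to be the real obstacle is the explicit elimination of the third paragraph — finding the parametric form of the two auxiliary points that makes the compatibility condition collapse to exactly $\cal{E}_{a}(p,v)$ — together with verifying that the induced map from $\cal{E}_{a}(p,v)$ to the affine $Q$-line is non-constant, so that positive rank of $\cal{E}_{a}(p,v)$ genuinely produces infinitely many admissible $Q$.
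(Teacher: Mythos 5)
Your overall strategy coincides with the paper's: reduce $\cal{G}(a,Q)\subset\cal{V}_{f}$ to the four conditions $aQ^{j}\in\cal{V}_{f}$, $j=0,\dots,3$, dispose of $j=0$ by the hypothesis on $E$, make $j=2$ automatic by taking $Q$ in the square class of $p(p^2+a)$, and encode the remaining two conditions in a rational point on $\cal{E}_{a}(p,v)$. But the heart of the proof is exactly the step you flag as ``the real obstacle'' and do not carry out: exhibiting parametric points on $E_{1}(a,Q)$ and $E_{3}(a,Q)$ whose compatibility condition collapses to the stated curve. Without that elimination you have a plan, not a proof. The paper executes it via the substitution $x_{1}=v^2$, $y_{1}=y$, $x_{2}=pQ$, $y_{2}=qQ^2$, $x_{3}=rQ^2$, $y_{3}=sQ^3$, which turns the three conditions into $Q=g_{1}(v,y)=g_{2}(p,q)=g_{3}(r,s)$ with $g_{1}=(y^2-v^6)/(av^2)$, $g_{2}=(p^3+ap)/q^2$, $g_{3}=ar/(s^2-r^3)$; the key trick is the choice $r=ap(p^2+a)$, which makes $g_{2}=g_{3}$ a conic in $(q,s)$ with a rational point at infinity, hence parameterizable by a parameter $u$. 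The remaining equation $g_{1}=g_{2}$ is then a quartic genus-one curve $\cal{C}_{a}$ in $(u,y)$ over $\Q(p,v)$ carrying the visible point $(0,(p^2+a)^3v^3)$, whose Weierstrass model is $\cal{E}_{a}(p,v)$, and the induced value $Q=4a^2p(p^2+a)u^2/(a^3p^3u^2-(p^2+a)^3)^2$ is manifestly non-constant in $u$.

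Your final paragraph is also under-justified. It is not ``standard'' that heights or primitive divisors along $\{nR\}$ produce infinitely many values of $Q$ pairwise inequivalent modulo fourth powers; note that every value of $Q$ obtained this way lies in the single square class of $p(p^2+a)$, so the assertion is genuinely about fourth powers and cannot be settled at the level of square classes. The paper proves it by observing that, for a fixed $u_{0}$, the condition $Q_{a}(p_{0},u)=Q_{a}(p_{0},u_{0})T^{4}$ together with the equation of $\cal{C}_{a}(p_{0},v_{0})$ cuts out a curve of genus $5$, which by Faltings has only finitely many rational points; an infinite set of pairwise inequivalent values is then extracted recursively. You would need to supply an argument of this kind to close the proof, and you should also make explicit the (needed) hypothesis that $p(p^2+a)$ is not a square, which is what guarantees the resulting $Q$ is a non-square.
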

\begin{proof}
It is clear that we are interested in finding solutions of the following system of Diophantine equations
\begin{equation}\label{Qsys}
\frac{y_{1}^2-x_{1}^3}{ax_{1}}=Q,\quad \frac{y_{2}^2-x_{2}^3}{ax_{2}}=Q^2,\quad \frac{y_{3}^2-x_{3}^3}{ax_{3}}=Q^3
\end{equation}
in variables $x_{i}, y_{i}, i=1, 2, 3$ and  $Q$. To find a solution we make the following substitution
\begin{equation}\label{Qsub1}
x_{1}=v^2,\;y_{1}=y,\quad x_{2}=pQ, \;y_{2}=qQ^2,\quad x_{3}=rQ^2,\;y_{3}=sQ^3
\end{equation}
which reduces the system (\ref{Qsys}) to the system
\begin{equation}\label{Qsys1}
Q=g_{1}(v, y)=g_{2}(p,q)=g_{3}(r,s),
\end{equation}
where
\begin{equation*}
g_{1}(v, y)=\frac{y^2-v^6}{av^2},\quad g_{2}(p, q)=\frac{p^3+ap}{q^2},\quad g_{3}(r,s)=\frac{ar}{s^2-r^3}.
\end{equation*}
We take $r=ap(p^2+a)$ and observe that the equation $g_{2}(p, q)=g_{3}(r, s)$ defines a quadric in variables $(q, s)$ which contains rational points at infinity and can be parameterized in the following way
$$
q=-\frac{a^3 p^3 u^2-a^3-3 a^2 p^2-3 a p^4-p^6}{2 a u},\quad s=\frac{a^3 p^3 u^2+a^3+3 a^2 p^2+3 a p^4+p^6}{2 u}.
$$
With $q, r, s$ chosen in this way we left with one equation $g_{1}(v, y)=g_{2}(p, q(p, u))$. If we treat $p, v$ as a rational independent variables, then this equation defines the curve in the plane $(u, Y)$, over the field $\Q(p, v)$, of the form
\begin{align*}
\cal{C}_{a}:\; (&a^3 p^3 u^2-(p^2+a)^3)^2y^2=\\
                &a^6p^6v^6u^4-2a^3p(p^2+a)(p^2(p^2+a)^2v^4-2)v^2u^2+(p^2+a)^6v^6=:F_{a}(p, v, u).
\end{align*}
As a consequence of our reasoning, we see that each point $(u, Y)$ lying on the curve $\cal{C}_{a}$, together with corresponding values of $q, r, s$ defined above, lead to the value of $Q$ in the following form
$$
Q=Q_{a}(p, u)=\frac{4a^2p(p^2+a)u^2}{(a^3 p^3 u^2-(p^2+a)^3)^2}.
$$

The genus of the curve $\cal{C}_{a}$ is equal to 1. Moreover, $\cal{C}_{a}$ contains $\Q(p, v)$-rational point $(0,(p^2+a)^3v^3)$. Thus, the curve $\cal{C}_{a}$ is birationally equivalent with a curve given by an appropriate Weierstrass equation. In fact, $\cal{C}_{a}$ is birationally equivalent with the curve $\cal{E}_{a}:\;Y^2=X(X-f_{1}(p, v))(X-f_{2}(p, v))$ from the statement of our theorem, via an appropriate invertible map $\phi:\;\cal{C}_{a}\ni (u, Y)\mapsto (X, Y)\in\cal{E}_{a}$. We do not present the form of the map because it is quite complicated (but can be easily computed using standard method of finding birational model of a quartic curve of the form $y^2=Au^4+Bu^3+Cu^2+Du+q^2$ with rational point $(0, q)$, see \cite[Theorem 2.17]{Was}).

Let us observe that the discriminant of $F_{a}$ with respect to $u$ is given by
$$
D_{a}(p, v)=\op{Disc}_{u}(F_{a}(p, v, u))=2^{12}a^{18}p^{10}(a+p^2)^{10}v^{20}(p^2(p^2+a)^2v^4-1)^2.
$$
Because we are interested in values of $p=p_{0}, v=v_{0}$ such that $p_{0}(p_{0}^{2}+a)$ is not a square it is clear that $D_{a}(p_{0}, v_{0})\neq 0$ and the corresponding curve $\cal{C}_{a}(p_{0}, v_{0})$ is non-singular. This is also true for the curve $\cal{E}_{a}(p_{0}, v_{0})$.

If now $p_{0}, v_{0}\in\Q$ are chosen in such a way, that $f_{1}(p_{0}, v_{0})f_{2}(p_{0}, v_{0})\neq 0$ and the elliptic curve $\cal{E}_{a}(p_{0}, v_{0})$ obtained from $\cal{E}_{a}$ after specialization $p=p_{0}, v=v_{0}$, has infinitely many rational points, then we get infinitely many rational values of $Q$. However, to finish the proof we need to show that if $\cal{E}_{a}(p_{0}, v_{0})$ has infinitely many rational points, then we get infinitely many essentially different values of $Q$. More precisely, we prove that there is an infinite set $\cal{U}\subset\Q$ such that for each $u', u''\in\cal{U}$ the quotient $Q_{a}(p_{0}, u')/Q_{a}(p_{0}, u'')$ is not a fourth power. We will construct the sequence of finite sets $(\cal{U}_{n})_{n\in\N}$ recursively in the following way. Let us fix a point $P_{0}=(X_{0}',Y_{0}')$ of infinite order on $\cal{E}_{a}(p_{0}, v_{0})$ and write $(u_{n}, Y_{n})=\phi^{-1}(nP_{0})\in \cal{C}_{a}(p_{0}, v_{0})(\Q)$. Let us note that the point $(u, Y)\in\cal{C}_{a}(p_{0}, v_{0})$ such that $Q_{a}(p_{0}, u)/Q_{a}(p_{0}, u_{0})$ is a fourth power, lie on the variety
$$
\cal{W}_{a}(u_{0}):\;Q_{a}(p_{0}, u)=Q_{a}(p_{0}, u_{0})T^4,\quad Y^2=F_{a}(p_{0}, v_{0},u),
$$
which, as an intersection of two surfaces, defines a curve over $\Q$. One can check that the genus of $\cal{W}_{a}(u_{0})$ is equal to 5, and thus there are only finitely many $u$'s which give a rational point. Thus, there is a positive integer $N_{0}$ such that for all $N\geq N_{0}$ the quotient $Q_{a}(p_{0}, u_{n})/Q_{a}(p_{0}, u_{0})$ is not a fourth power. We thus take $\cal{U}_{0}=\{u_{0}', u_{1}'\}$, where $u_{0}'=u_{0}, u_{1}'=u_{N_{0}}$. Let us suppose that we constructed the set $U_{n-1}=\{u_{0}', u_{1}',\ldots, u_{n}'\}\subset\{u_{i}:\;i\in\N\}$ with required properties, i.e., for each $i, j\in\{0,\ldots, n\}, i\neq j$ the quotient $Q_{a}(p_{0}, v_{0}, u_{i}')/Q_{a}(p_{0}, v_{0}, u_{j}')$ is not a fourth power. To find $u_{n+1}'$ we note that for each $i\in\{0, \ldots, n\}$ the curve $\cal{W}_{a}(u_{i})$ has only finitely many rational solutions $(u, T, Y)$. Thus, there is an integer $N_{n+1}$ such that for all $n\geq N_{n+1}$ there is no rational numbers $T, Y$ such that $(u_{n}, T, Y)$ lies on $\cal{W}_{a}(u_{i})$ for $i=0, \ldots, n$. As a consequence of our reasoning we can take $u_{n+1}'=u_{N_{n+1}}$ and put $\cal{U}_{n+1}=\cal{U}_{n}\cup\{u_{n+1}'\}$. Our reasoning shows the existence of an increasing sequence $(\cal{U}_{n})_{n\in\N}$ of sets with cardinality $\# \cal{U}_{n}=n+1$, each having a required property. To finish the proof, it is enough to take $\cal{U}=\lim_{n\rightarrow+\infty}\cal{U}_{n}$.
\end{proof}

It should be noted that if $p_{0}, v_{0}\in\Q$ are such that $\cal{E}_{a}(p_{0}, v_{0})$ has positive rank, then the corresponding value of $Q$ tends to be pretty large. To see this, let us consider the following example.

\begin{exam}
{\rm Let us apply Theorem \ref{fixeda} for a specific value of $a$. We take $a=3$, which is the smallest positive integer such that the curve $E:\;y^2=x^3+3x$ has a positive rank. Next, we chose $p=4, v=2$ and the resulting elliptic curve is
$$
E_{3}(4,2):\;Y^2=X(X-21070848)(X-21070620).
$$
One can check that $E_{3}(4,2)$ has rank 1 and the generator of infinite part is generated by the point $P=(X,Y)=(732246016/9, 14683034857472/27)$. We have that
$$
\phi^{-1}(P)=(u, y)=\left(\frac{4204567}{4146944},-\frac{1592941018808}{199115595007}\right).
$$
Performing all necessary computations we found the value of $Q$ given by
$$
Q=Q_{3}\left(4, \frac{4204567}{4146944}\right)=19\cdot \left(\frac{476639376}{199115595007}\right)^2.
$$
Then we have the point $P_{i}$ of infinite order on $E_{i}(3, Q), i=0, 1, 2, 3$, where $P_{0}=(1, 2)$ and
\begin{align*}
P_{1}&=\left(4,\frac{1592941018808}{199115595007}\right),\\
P_{2}&=\left(\frac{17266067201278872576}{199115595007^2},\frac{78182031219520468152777449472}{199115595007^3}\right),\\
P_{3}&=\left(\frac{57\cdot 8633033600639436288^2}{39647020174991643330049^2}, \frac{127652133024668050546\cdot 51798201603836617728^2}{39647020174991643330049^3}\right).
\end{align*}
}
\end{exam}

\begin{rem}
{\rm Let us note that the smallest positive integer $Q>1$ such that $(3, Q)$ is a proper solution of Problem \ref{mainprob} is $Q=17$. The point $P_{i}$ is of infinite order on $E_{i}(3, 17), i=0, 1, 2, 3$, where
$$
P_{0}=(1, 1), \; P_{1}=(25, 130),\; P_{2}=\left(\frac{121}{25}, \frac{8206}{125}\right),\; P_{3}=\left(\frac{49}{121}, \frac{102830}{1331}\right).
$$
}
\end{rem}
Let
$$
\cal{R}=\{a\in\N_{+}:\;\mbox{rank of the curve}\;y^2=x^3+ax\;\mbox{is positive}\}.
$$
We performed small computer search and found that for each $a\in\cal{R}\cap[1, 10^3]$, there are $p_{0}, v_{0}\in\N$ such that the curve $\cal{E}_{a}(p_{0}, v_{0})$ has positive rank. In fact, in the considered range of $a$, we can always take $p_{0}$ from the set $\{2, 3, 4\}$ and $v_{0}$ form the set $\{1, \ldots, 10\}$. Although small, our computations suggest the following:

\begin{conj}\label{fixedacurve}
For each $a\in\cal{R}$ there is a specialization $p=p_{0}, v=v_{0}$ such that $p_{0}(p_{0}^2+a)$ is not a square and the rank of the elliptic curve $\cal{E}_{a}(p_{0}, v_{0})$ is positive.
\end{conj}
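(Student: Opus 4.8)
The natural attack is to view $\cal{E}_a$, for a fixed $a\in\cal{R}$, as an elliptic surface over the affine plane with coordinates $(p,v)$, to show that its \emph{generic} rank --- the rank of $\cal{E}_a$ over $\Q(p,v)$, or even over $\Q(a,p,v)$ --- is positive, and then to specialise. Concretely, suppose one has a section $P=P(p,v)\in\cal{E}_a(\Q(p,v))$ of infinite order. Restricting $\cal{E}_a$ to a generic line $\ell$ in the $(p,v)$-plane gives an elliptic surface over $\mathbb{P}^1$ with non-constant $j$-invariant: indeed $\cal{E}_a$ has the shape $Y^2=X(X-f_1)(X-f_2)$ with $f_2/f_1=1-\big(p^2(p^2+a)^2v^4\big)^{-1}$, so $j$ is a non-constant function of $p^2(p^2+a)^2v^4$, which varies along a generic $\ell$. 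For a generic $\ell$ the section $P$ stays non-torsion, and Silverman's specialisation theorem then gives $r\big(\cal{E}_a(p_0,v_0)\big)\ge 1$ for all but finitely many $(p_0,v_0)\in\ell(\Q)$. The excluded $p_0$ for which $p_0(p_0^2+a)$ is a square are exactly the $x$-coordinates of rational points of $E\colon y^2=x^3+ax$, hence form a thin set, so infinitely many admissible pairs $(p_0,v_0)$ survive --- which would prove more than the conjecture asks.

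Thus everything reduces to producing a single non-torsion section of $\cal{E}_a$. The first thing to try is what is visibly present: in the model $Y^2=F_a(p,v,u)$ of the curve $\cal{C}_a$ the quartic $F_a$ is \emph{even} in $u$, with both the leading coefficient $a^6p^6v^6$ and the constant term $(p^2+a)^6v^6$ perfect squares, so $\cal{C}_a$ carries four obvious rational points, two over $u=0$ (namely $Y=\pm(p^2+a)^3v^3$) and two over $u=\infty$. These are useless, however: the automorphism $(u,Y)\mapsto(-u,Y)$ of $\cal{C}_a$ is not the identity, fixes exactly these four points, and fixes the chosen origin $(0,(p^2+a)^3v^3)$, so it must be $[-1]$; hence the four obvious points are precisely the $2$-torsion of $\cal{C}_a$, namely $(0,0)$, $(f_1,0)$, $(f_2,0)$ on $\cal{E}_a$. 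One must therefore find a genuinely hidden point. Three concrete avenues: exploit the complex multiplication on the curves $E_i(a,Q)$ (they all have $j=1728$; $E_i(a,Q)$ is a quartic twist of $E$ by $Q^i$, so one is really asking for a quadratic twist and two quartic twists of $E$ to have positive rank at once, and a Heegner-type point might be available); solve the relations behind $\cal{C}_a$ for a one-parameter family of rational points and decide whether such a family is forced to be non-torsion; or search numerically, over many $(a,p,v)$, for a point whose coordinates are low-degree rational functions of the parameters. If any of these produces a non-torsion section, the previous paragraph finishes the proof uniformly in $a$.

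I expect this second step to be not just the main obstacle but a genuinely hard one, and I suspect it is why the statement is left as a conjecture. Since the obvious sections are torsion, there is no a priori reason for the generic rank of $\cal{E}_a$ to be positive; if it is $0$, the conjecture becomes a uniform \emph{rank-jumping} statement --- a family of generic rank $0$ having infinitely many fibres of positive rank, and this for every $a\in\cal{R}$ --- which lies beyond current unconditional methods. The realistic intermediate targets are: (i) prove positivity of the generic rank of $\cal{E}_a$ by exhibiting an explicit section, which would settle the conjecture completely; (ii) prove it conditionally, under the parity conjecture together with control of how the root number of $\cal{E}_a(p_0,v_0)$ varies with $(p_0,v_0)$; or (iii) isolate an explicit infinite subfamily of $a\in\cal{R}$ for which the twisting and CM structure of the $E_i(a,Q)$ yields a non-torsion point on some $\cal{E}_a(p_0,v_0)$ in closed form, verifying the conjecture for those $a$. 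The reported numerical evidence --- every $a\in\cal{R}\cap[1,10^3]$ handled with $p_0\in\{2,3,4\}$ and $v_0\in\{1,\dots,10\}$ --- is consistent with such a closed-form mechanism but does not by itself exhibit one.
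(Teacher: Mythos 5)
The statement you are addressing is Conjecture~\ref{fixedacurve}: the paper offers no proof of it, only the computational evidence that for $a\in\cal{R}\cap[1,10^3]$ a suitable $(p_0,v_0)$ always exists with $p_0\in\{2,3,4\}$, $v_0\in\{1,\ldots,10\}$. Your proposal is therefore not something that can be checked against an argument in the text, and, as you yourself say explicitly, it is not a proof: the entire weight rests on producing a non-torsion section of $\cal{E}_a$ over $\Q(p,v)$, and you do not produce one. That is the genuine gap. Your reduction is sound in outline --- a non-torsion section plus Silverman specialization along a line with non-constant $j$-invariant would give positive rank for all but finitely many fibres, and the excluded $p_0$ with $p_0(p_0^2+a)$ a square (the $x$-coordinates of points of $E:\;y^2=x^3+ax$) form a sparse set, so admissible specializations would remain. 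But without the section the argument does not start, and if the generic rank of $\cal{E}_a$ is $0$ the conjecture becomes a uniform rank-jumping statement that is out of reach unconditionally.

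Your analysis of the visible points is correct and is the most substantive part of the proposal: since $F_a(p,v,u)$ is even in $u$ with square leading and constant coefficients, the involution $(u,Y)\mapsto(-u,Y)$ fixes the origin $(0,(p^2+a)^3v^3)$ and hence (barring extra automorphisms of the generic fibre) equals $[-1]$, so its four fixed points are exactly $O$ together with the $2$-torsion $(0,0)$, $(f_1,0)$, $(f_2,0)$ of the model $Y^2=X(X-f_1)(X-f_2)$. This correctly explains why the obvious rational points are useless and why the paper could not upgrade Theorem~\ref{fixeda} from a conditional statement to an unconditional one. Your suggested fallbacks --- an explicit section found by an ansatz in low degree, a parity/root-number argument, or exploiting the $j=1728$ CM structure of the quartic twists $E_i(a,Q)$ --- are reasonable directions, but none is carried out, so the conjecture remains exactly as open after your proposal as before it.
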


\bigskip

In light of Theorem \ref{fixeda} one can investigate the following related question. Let $Q\in\Z\setminus\{-1, 0, 1\}$ be given. Is it possible to find an $a\in\Z$ which is a fourth power-free and the elliptic curve $E_{i}(a, Q)$ has positive rank for $i=0, 1, 2, 3$? Equivalently, we are interested in the solutions of the system od Diophantine equations
\begin{equation}\label{asys1}
a=f(x_{0},y_{0})=\frac{f(x_{1},y_{1})}{Q}=\frac{f(x_{2},y_{2})}{Q^2}=\frac{f(x_{3},y_{3})}{Q^3}.
\end{equation}
The common value of the fractions $f(x_{i},y_{i})/Q^{i}, i=0, 1, 2, 3$ is the value of $a$ we are looking for. Let us put $x_{i}=p_{i}T, y_{i}=q_{i}T$ and solve the equation $f(x_{i}, y_{i})/Q^{i}=f(x_{i+1}, y_{i+1})/Q^{i+1}$ with respect to the variable $T$, for $i=0, 1, 2$.  As a result, we see that to solve (\ref{asys1}), the value of $T$ need to be of the form
$$
T=\frac{p_{0}q_{1}^2-Qp_{1}q_{0}^2}{p_{0}p_{1}(p_{1}^2-Qp_{0}^2)}=\frac{p_{1}q_{2}^2-Qp_{2}q_{1}^2}{p_{1}p_{2}(p_{2}^2-Qp_{1}^2)}=\frac{p_{2}q_{3}^2-Qp_{3}q_{2}^2}{p_{2}p_{3}(p_{3}^2-Qp_{2}^2)}.
$$
We arrived to the following conclusion: if $(x_{i}, y_{i}), i=0, 1, 2, 3$ solve the system (\ref{asys1}), then the corresponding values of $p_{i}, q_{i}, i=0, 1, 2, 3,$ satisfy the system
\begin{equation}\label{asys2}
\cal{C}'_{Q}:\;\begin{cases}
\begin{array}{lll}
  p_1 p_2 Q(p_1^2 Q-p_2^2)q_{0}^2+p_0 p_2(p_2^2-p_0^2 Q^2)q_{1}^2+p_0 p_1(p_0^2 Q-p_1^2)q_{2}^2 & = & 0, \\
  p_2 p_3 Q(p_2^2 Q-p_3^2)q_{1}^2+p_1 p_3(p_3^2-p_1^2 Q^2)q_{2}^2+p_1 p_2(p_1^2 Q-p_2^2)q_{3}^2 & = & 0,
\end{array}
\end{cases}
\end{equation}
and the corresponding value of $a$ has the form
$$
a=-\frac{(p_1^3 q_0^2-p_0^3 q_1^2)(p_1 q_0^2 Q-p_0 q_1^2)}{p_0^2p_1^2(p_1^2-p_0^2 Q)^2}.
$$
The variety  $\cal{C}'$ can be seen as a projective variety defined over the function field $K=\Q(Q,p_0,p_1,p_2,p_{3})$. This is just an intersection of two quadratic surfaces, and thus, from geometric point of view, we get a curve of genus 1. Unfortunately, it seems that there is no general point on $\cal{C}'$ and it is non-trivial problem to find a specialization $\overline{p}=(p_{0}, p_{1}, p_{2}, p_{3})$ such that the curve $\cal{C}'(\overline{p})$ has infinitely many rational points and $(a, Q)$ is a proper solution of Problem \ref{mainprob}. 

\begin{exam}
{\rm Let $Q=2$. We know that $(47, 2)$ is a proper solution of Problem \ref{mainprob} for $f(x, y)=(y^2-x^3)/x$. Using points from Table 1. one can take 
$\overline{p}=\left(289/25, 14,  96, 4716544/18225) \right)$ and note that the curve $\cal{C}'_{2}(\overline{p})$ contains the point $S=(-5712/125, 14, 96, 10271916928/2460375)$. Using this point as a point at infinity, we get that $\cal{C}'_{2}(\overline{p})$ is birational, via an appropriate map $\chi$, with the elliptic curve with minimal Weierstrass equation $\cal{E}'_{2}(\overline{p}):\;y^2=x^3+ax+b$, where
\begin{align*}
a&=-114789376213793508149518054796962396588,\\
b&=-154452043874358875364810179220092982963254111141598049488.
\end{align*}
The curve $\cal{E}'_{2}(\overline{p})$ contains the point of infinite order 
$$
\chi(S')=\left(-\frac{32968008403251534251}{9}, \frac{397625982502899147956961689375}{27}\right),
$$
where $S'=(5712/125, 14, 96, 10271916928/2460375)$. Thus, one can compute the points $\chi^{-1}(m\chi(S'))$ for $m\in\N_{+}, m\neq 2,$ and get corresponding points on  $\cal{C}'_{2}(\overline{p})$, and an appropriate value of $a_{m}(\overline{p},\overline{q})$. For $m=1$ we get nothing new, i.e., $a_{1}(\overline{p},\overline{q})=47u^4$ for some $u\in\Q$. For $m=3$, the value $a_{m}(\overline{p},\overline{q})$ is, up to fourth power, an integer with 1564 digits. 

As we will see in the next section, there are many $a$'s such that for given $Q\in\{2,\ldots, 12\}$ the pair is a proper solution of Problem \ref{mainprob}.
}
\end{exam}

\section{Computational observations, questions and conjectures}\label{sec4}

In this section we collect results of our computations and formulate some questions and conjectures which may stimulate further research. To compute ranks of several thousands of elliptic curves we used very fast procedure {\tt ellrank} implemented in PARI/GP \cite{pari}. The function {\tt ellrank} attempts to compute the rank of a given elliptic curve $E$. The function returns the vector $[r, R, s, L]$. The rank of $E(\Q)$ is between $r$ and $R$ (both included), $s$ is related to the Tate-Shafarevitch group, and $L$ is a list of independent, non-torsion rational points on the curve. To init the curve we need to use procedure {\tt ellinit}. Here is a sample session:

{\tt
\noindent gp> E=ellinit([47,0]); \\
\noindent gp> ellrank(E) \\
\noindent [1, 1, 0, [[289/25, 5712/125]]] \\
}
So the rank of an elliptic curve $E:\;y^2=x^3+47x$ is one and an infinite part of $E(\Q)$ is generated by the point $(289/25, 5712/125)$. However, in some cases PARI/GP is not able to compute the rank exactly, like in case of the elliptic curve $H:\;y^2=x^3+257x$:

{\tt
\noindent gp> H=ellinit([257,0]); \\
\noindent gp> ellrank(H) \\
\noindent [0, 2, 0, []] \\
}
We get that the rank of $H(\Q)$ is between 0 and 2. However, in such cases we invoke the procedure {\tt ellanalyticrank} which computes the pair $[r, d]$, where $r$ is an analytic rank of an elliptic curve and $d$ the value of $d$-th derivative at 1 of a $L$-function associated to $H$. We have

{\tt
\noindent gp> analyticrank(H) \\
\noindent gp> [0, 7.4090738601929029389865884843731744616] \\
}
Thus, invoking Kolyvagin's \cite{Kol} together with Wiles' \cite{Wil} result on modularity of elliptic curves over $\Q$, we know that the rank of $H$ is equal to 0.

After this digression, let us back to Problem \ref{mainprob} with $f(x, y)=(y^2-x^3)/x$. We performed additional computations and looked for positive integers $a, Q$ such that $\cal{G}(a, Q)\subset\cal{V}_{f}$ with $Q\in\{2,\ldots, 12\}$ and $a\leq 10^5$. For a given fourth power-free $Q\in\N$ one can define the set
$$
C_{Q}:=\{a\in\N:\;\cal{G}(a, Q)\subset \cal{V}_{f}\}
$$
and its counting function
$$
C_{Q}(x)=\#(C_{Q}\cap [1,x]).
$$
Because $(a, Q)$ is a solution of Problem \ref{mainprob} if and only if $(a, Q^{3})$ is a solution, we get that for any square-free $Q$ we have equality of sets $C_{Q}=C_{Q^{3}}$. Moreover, from the first part of Theorem \ref{bihomo} applied with $d=3$, we know that $C_{Q^2}$ is infinite for each $Q$.

For $Q\in\{1, \ldots, 14\}$ we have the following initial elements of the sets $C_{Q}$ (we omit $C_{8}$ due to the equality $C_{8}=C_{2}$):
\begin{equation*}
\begin{array}{|c|l|}
\hline
  Q & C_{Q} \\
\hline
  2 & 47, 69, 77, 79, 89, 94, 127, 138, 154, 155, 158, 171, 178, 188, 205, 219, 223, \ldots \\
  3 & 20, 31, 35, 37, 40, 47, 55, 60, 61, 73, 79, 92, 93, 95, 105, 111, 120, 127, 136, \ldots \\
  4 & 5, 14, 15, 20, 21, 31, 34, 37, 39, 46, 47, 49, 53, 55, 56, 60, 65, 66, 69, 73, \ldots\\
  5 & 8, 18, 19, 24, 29, 33, 40, 56, 79, 88, 90, 95, 98, 99, 104, 120, 126, 128, 129, \ldots\\
  6 & 15, 19, 33, 39, 40, 69, 73, 83, 85, 88, 90, 93, 95, 98, 104, 111, 114, 115, 129, \ldots\\
  7 & 51, 55, 67, 68, 73, 85, 89, 92, 113, 115, 120, 129, 136, 149, 155, 158, 179,  \ldots \\
  9 &14, 19, 20, 21, 24, 31, 34, 35, 37, 39, 40, 46, 47, 55, 60, 61, 65, 66, 67, 69,  \ldots \\
  10 &15, 24, 31, 34, 39, 65, 66, 69, 77, 89, 104, 111, 114, 129, 141, 143, 150, 156,\ldots \\
  11 &15, 18, 20, 24, 28, 31, 47, 53, 69, 79, 84, 95, 98, 104, 111, 113, 127, 133, 136,\ldots \\
  12 & 14, 34, 37, 46, 47, 61, 69, 85, 92, 94, 95, 111, 126, 143, 148, 154, 157, 158,\ldots\\
  \hline
  \end{array}
\end{equation*}
\begin{center}
Table 2. Initial elements of the sets $C_{Q}$ for $Q=2,\ldots, 12, Q\neq 8$.
\end{center}

In the light of our computations one can ask the following

\begin{ques}
What is the order of magnitude of $C_{Q}(x)$?
\end{ques}

Our numerical computations suggest that, at least for $Q\in\{2, \ldots, 12\}$, the function $C_{Q}(x)/x$ has a limit (see picture below), say $L(Q)$. The conjectured estimations of $L(Q)$ can be easily read of from the values of $C_{Q}(10^5)$, which are as follows
\begin{equation*}
\begin{array}{|c|c|c|c|c|c|}
\hline
  Q           & 2       & 3      & 4       & 5       & 6     \\
\hline
 C_{Q}(10^5)  & 13623   & 17766  & 37872   & 17130   &  17456\\
\hline
\hline
  Q           & 7      & 9       & 10       & 11     & 12\\        \hline
 C_{Q}(10^5)  & 16120  & 38562   & 13458    & 17682  & 17292 \\
\hline
\end{array}
\end{equation*}
\begin{center}
Table 3. Values of $C_{Q}(10^{5})$ for $Q=2,\ldots, 12, Q\neq 8$.
\end{center}
In particular, $L(2)\approx 0.13623, L(3)\approx 0.17766, L(4)\approx 0.37872$ and so on.

Results of our computations strongly suggest the following:

\begin{conj}
Let $f(x,y)=(y^2-x^3)/x$.
\begin{enumerate}
\item For each $Q\in\N_{\geq 2}$ the set $C_{Q}$ is not empty.
\item For each $Q\in\N_{\geq 2}$ the set $C_{Q}$ is infinite.
\end{enumerate}
\end{conj}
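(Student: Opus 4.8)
The plan is to prove both parts together, and to begin by noting how much is already in hand. Since $f(x,y)=(y^{2}-x^{3})/x$ is weighted homogeneous of degree $4$ with weights $(2,3)$, Remark \ref{rem1} reduces the inclusion $\cal{G}(a,Q)\subset\cal{V}_{f}$ to the four conditions $\{a,aQ,aQ^{2},aQ^{3}\}\subset\cal{V}_{f}$; equivalently, each of the four elliptic curves $y^{2}=x^{3}+aQ^{j}x$ ($j=0,1,2,3$) must carry a rational point with $x\neq0$. One may also assume $Q$ is fourth-power-free. When $Q$ is a perfect square the conjecture is already a theorem: the first part of Theorem \ref{bihomo} with $d=3$ shows $C_{Q^{2}}$ is infinite for every $Q$. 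So the real content is the case of non-square $Q$, and the target is to show that for each such $Q$ there are infinitely many $a$ making all four quartic twists of $y^{2}=x^{3}+x$ above carry a non-torsion point.

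I would attack this through the curve $\cal{C}'_{Q}$ already isolated in the discussion around equations (\ref{asys1}) and (\ref{asys2}). Writing $x_{j}=p_{j}T$, $y_{j}=q_{j}T$ and eliminating $T$ and $a$, a simultaneous rational point on the four twists corresponds to a point of $\cal{C}'_{Q}$, which is an intersection of two quadrics in the coordinates $(q_{0},q_{1},q_{2},q_{3})$, hence geometrically a genus-one curve over the function field $K=\Q(Q,p_{0},p_{1},p_{2},p_{3})$, and the accompanying rational expression then produces the value of $a$. The crux is to find a specialization $p_{j}=\pi_{j}(Q)\in\Q(Q)$ for which $\cal{C}'_{Q}$ not only has a $\Q(Q)$-rational point but in fact has a non-torsion one — for instance by forcing the two quadratic forms in the $q_{j}$ to share a visible common zero, or by arranging $Q\,p_{j}p_{j+1}$ to be squares so that each intermediate conic has its rational point at infinity, or simply by a guided computer search over low-degree $\pi_{j}$. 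Granting such a specialization, one puts the resulting genus-one curve $\cal{E}'_{Q}$ in Weierstrass form over $\Q(Q)$, reads off a non-torsion section, and invokes Silverman's specialization theorem to deduce that $\cal{E}'_{Q}$ has positive rank for all but finitely many integers $Q$; the finitely many exceptional $Q$ can then be handled by direct point searches (and Table 2 already exhibits such points for $Q\le12$). Finally, exactly as in the proof of Theorem \ref{fixeda}, the locus on $\cal{C}'_{Q}$ where two of the produced values of $a$ differ by a fourth power (or are otherwise identified) is cut out by extra equations and has genus at least $2$, so by Faltings only finitely many coincidences occur; hence the infinitely many rational points of $\cal{E}'_{Q}$ give infinitely many essentially distinct $a\in C_{Q}$. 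That is part (2), and part (1) follows at once.

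The main obstacle is precisely the specialization step. As the paper observes, $\cal{C}'$ carries no evident $K$-rational point, so finding one that works uniformly in $Q$ — rather than case by case, as was done for $Q=2$ — is delicate, and even after a rational point is found one must still secure positive rank for every $Q$, which a parity obstruction could in principle defeat for a positive-density set of $Q$. If a clean uniform parametrization proves elusive, a softer approach to part (1) alone would be to argue that, as $a$ ranges over fourth-power-free integers, the set of $a$ with $y^{2}=x^{3}+ax$ of positive rank has positive density and behaves sufficiently independently over the four twisted values $a,aQ,aQ^{2},aQ^{3}$; but this runs straight into the circle of questions about ranks of quartic twists (of congruent-number type) that makes the whole problem hard, so I would expect the explicit construction above — with the real work being a clever choice of the $\pi_{j}$ and the rank verification for the resulting elliptic surface — to be the realistic route.
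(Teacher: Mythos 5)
This statement is labelled a \emph{conjecture} in the paper, and the paper offers no proof of it: the only things actually established are that $C_{Q^{2}}$ is infinite for every $Q$ (Theorem \ref{bihomo} with $d=3$, with the explicit lower bound $C_{Q^{2}}(x)>c(Q^{2})x^{1/4}$ from the accompanying remark) and computational evidence for $Q\le 12$. Your write-up is an honest research programme rather than a proof, and the gap you yourself flag is exactly the one that makes this a conjecture: everything hinges on producing, for every non-square fourth-power-free $Q$ (or uniformly over $\Q(Q)$), a specialization $p_{j}=\pi_{j}(Q)$ for which the genus-one curve $\cal{C}'_{Q}$ acquires a non-torsion rational point. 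The paper explicitly states that $\cal{C}'$ appears to have no general point over $K=\Q(Q,p_{0},\ldots,p_{3})$ and that finding a good specialization is non-trivial even for a single $Q$ (the worked case $Q=2$ relies on the already-known points of Table~1, i.e.\ on knowing $47\in C_{2}$ in advance). Without that input, nothing downstream — the Weierstrass model, Silverman specialization, the Faltings argument for distinctness of the resulting $a$'s — can be set in motion. Your fallback for part (1), a density-plus-independence heuristic for ranks of the quartic twists $y^{2}=x^{3}+aQ^{j}x$, is, as you note, precisely the congruent-number-type rank problem that is itself open.

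Two smaller points. First, even granting a non-torsion section over $\Q(Q)$, Silverman's theorem leaves finitely many exceptional $Q$, and a root-number/parity obstruction could in principle force the generic rank to be $0$ for the family you construct, so ``positive rank for every $Q$'' is not automatic from a single clever parametrization; your own caveat about this is well placed. Second, $C_{Q}$ is defined as a subset of $\N$, so after clearing fourth powers from the rational $a$'s your construction yields, you would still need to control the sign of $a$; this is minor but should be said. In short: the architecture you describe is the natural one (it is essentially the one the author uses for the complementary problem in Theorem \ref{fixeda}, with the roles of $a$ and $Q$ exchanged), but the decisive step is missing, and the statement remains unproven both in your proposal and in the paper.
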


\begin{rem}
{\rm From the first part of Theorem \ref{bihomo} applied for $d=3$ we know that $C_{Q^{2}}$ is infinite. The method of proof can be used to get a lower bound for $C_{Q^{2}}(x)$. Indeed, to make things explicit as much as possible, we take in the proof of Theorem \ref{bihomo}
$$
p_{0}=\frac{3}{4},\quad p_{1}=\frac{1}{4Q},\quad q_{0}=3(3Q^2-1)(3Q^2+1)u,\quad q_{1}=(1-3Q^2)(3Q^2+1)v.
$$
Then, the corresponding value of $a$ is given by
$$
a=a(u,v)=768Q^3 (3 Q^2-1)^2 (3 Q^2+1)^2 (3 Q u^2-v^2)(3 Q^3 v^2-u^2).
$$
Now, if we fix $u\in\N$, then for each integer
$$
v\in \left(\frac{1}{Q\sqrt{3Q}}u,\sqrt{3Q}u\right)
$$
we have $a(u, v)>0$. It is possible that for some $u, v\in\N_{+}$ the value $a(u,v)$ is a fourth power. However, the number of such possibilities is finite because the (projective) curve $z^4=a(u,v)$ is of genus 3, and has only finitely many primitive solutions. As a consequence we get that
$$
C_{Q^{2}}(x)>c(Q^2)x^{1/4}
$$
for some computable constant $c(Q^2)$. However, this bound is far weaker than our expectations.
}
\end{rem}
\begin{figure}[h]\label{picture} 
       \centering
         \includegraphics[width=4.5in]{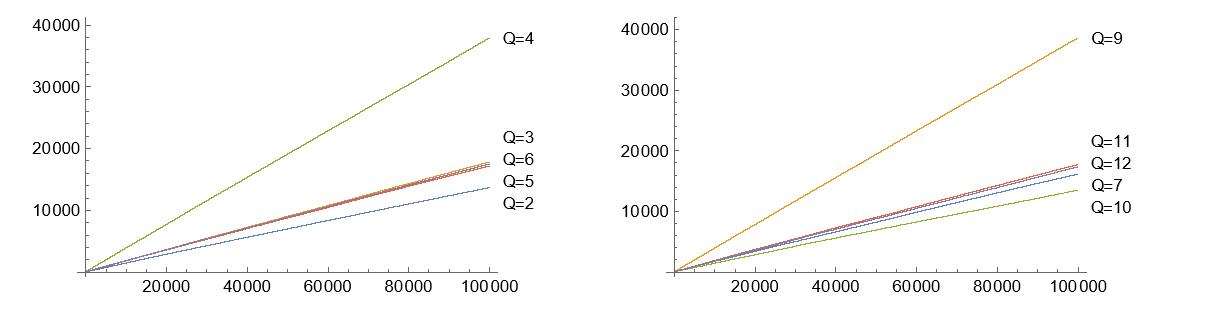}
        \caption{Plots of the functions $C_{Q}(x)$ for $Q=2, \ldots 6$ (left) and $Q=7,\ldots, 12, Q\neq 8$, (right) and $x\leq 10^5$.}
       \label{fig:pic1}
    \end{figure}

\begin{figure}[h]\label{picture} 
       \centering
         \includegraphics[width=4.5in]{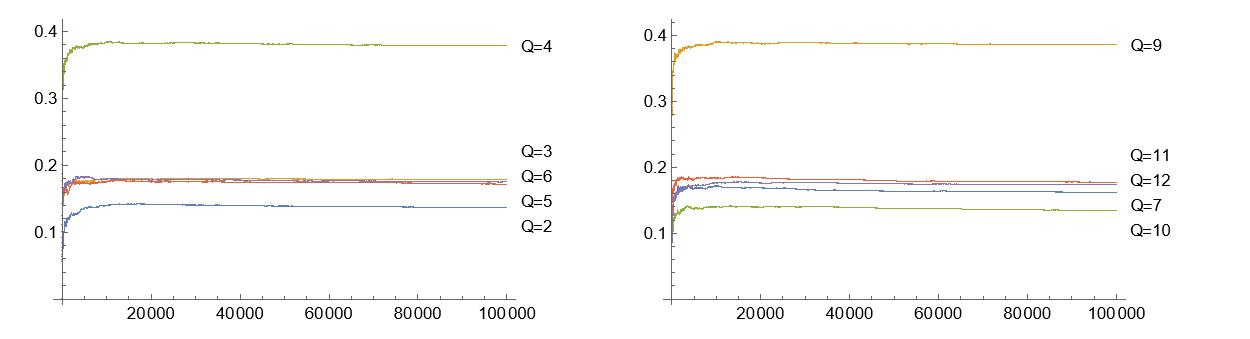}
        \caption{Plots of the functions $C_{Q}(x)/x$ for $Q=2, \ldots 6$ (left) and $Q=7,\ldots, 12, Q\neq 8$, (right) and $x\leq 10^5$.}
       \label{fig:pic2}
    \end{figure}

It is also quite interesting to note that
\begin{align*}
\bigcap_{Q=2}^{12}C_{Q}\cap [1,10^5]=\{&32504, 34023, 36333, 40399, 41080, 41415, 55260, \\
                                        &55965, 73242, 86415, 99342\},
\end{align*}
 i.e., for each $Q\in\{2,\ldots, 12\}, i\in\N$ and each $a$ from the above set, the elliptic curve $E_{i}(a, Q):\;y^2=x^3+aQ^{i}x$ has positive rank. An additional check confirms that
\begin{align*}
\bigcap_{Q=2}^{13}C_{Q}\cap [1,10^5]&=\{32504, 34023, 36333, 40399, 41080, 55260, 73242, 99342\},\\
\bigcap_{Q=2}^{14}C_{Q}\cap [1,10^5]&=\{40399, 73242, 99342\}.
\end{align*}
However, the set $\bigcap_{Q=2}^{15}C_{Q}\cap [1,10^5]$ is empty. Our computations suggest the following:

\begin{ques}
Let $N\in\N_{+}$ be given. Is the set $\bigcap_{Q=2}^{N}C_{Q}$ non-empty?
\end{ques}

We expect that the answer on the question above is YES. On the other hand, we have the following:

\begin{prop}
We have $\bigcap_{Q=2}^{\infty}C_{Q}=\emptyset$.
\end{prop}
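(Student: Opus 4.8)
The plan is to prove a little more than the statement: for every $a\in\N_{+}$ I will exhibit a single fourth-power-free integer $Q\ge 2$ with $a\notin C_{Q}$, and this forces $\bigcap_{Q=2}^{\infty}C_{Q}=\emptyset$. The whole argument rests on the observation that $\cal{V}_{f}$ contains no perfect fourth power.

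To prove that observation, first note the trivial quartic-twist relation for $f(x,y)=(y^{2}-x^{3})/x$: for $c\in\Q\setminus\{0\}$, the map $(x,y)\mapsto(x/c^{2},y/c^{3})$ is an isomorphism over $\Q$ between $y^{2}=x^{3}+c^{4}x$ and $y^{2}=x^{3}+x$, and it sends points with $x\neq 0$ to points with $x\neq 0$. Now $E\colon y^{2}=x^{3}+x$ satisfies $E(\Q)=\{\cal{O},(0,0)\}$: its torsion subgroup is $\Z/2\Z$ (a rational point of order $4$ would force $x=\pm1$, and neither value makes $x^{3}+x$ a rational square; alternatively use reduction modulo $5$ and $13$), and its rank is $0$, which is classical — for instance because the $2$-isogenous curve $y^{2}=x^{3}-4x$ is the congruent number curve for $n=2$ and $2$ is not a congruent number (it is also immediate from PARI). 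Consequently $y^{2}=x^{3}+c^{4}x$ has no rational point with $x\neq 0$, i.e. $c^{4}\notin\cal{V}_{f}$ for all $c\in\Q\setminus\{0\}$.

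It remains to arrange, for each $a\in\N_{+}$, a fourth-power-free $Q\ge 2$ such that $\cal{G}(a,Q)$ contains a perfect fourth power. If $a$ is itself a fourth power, then $a=aQ^{0}\in\cal{G}(a,Q)$ for every such $Q$ and we are done by the previous step. Otherwise write $a=a_{0}s^{4}$, where $a_{0}\ge 2$ is the fourth-power-free part of $a$, and set
$$
Q:=\prod_{p\mid a_{0}}p^{\,4-v_{p}(a_{0})}.
$$
Each exponent $4-v_{p}(a_{0})$ lies in $\{1,2,3\}$, so $Q$ is a fourth-power-free integer with $Q\ge 2$, while $aQ=s^{4}\prod_{p\mid a_{0}}p^{4}=\bigl(s\prod_{p\mid a_{0}}p\bigr)^{4}$ is a perfect fourth power; hence the term $aQ^{1}$ of $\cal{G}(a,Q)$ does not lie in $\cal{V}_{f}$. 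In either case $\cal{G}(a,Q)\not\subset\cal{V}_{f}$, i.e. $a\notin C_{Q}$, and the proposition follows.

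I do not expect a genuine obstacle: the only non-formal input is the classical equality $E(\Q)=\{\cal{O},(0,0)\}$ for $E\colon y^{2}=x^{3}+x$, and everything else is elementary bookkeeping. The single point requiring mild care is that $Q$ is required to be fourth-power-free, which is why the exponents $4-v_{p}(a_{0})$ (rather than multiples of $4$) appear, and why the case of $a$ being itself a fourth power is handled separately, through the initial term $aQ^{0}$ of the progression.
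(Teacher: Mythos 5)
Your proof is correct and follows essentially the same route as the paper: you force a term of the progression to be a perfect fourth power, so that the corresponding curve becomes a trivial quartic twist of the rank-zero curve $y^{2}=x^{3}+x$, and your $Q=\prod_{p\mid a_{0}}p^{\,4-v_{p}(a_{0})}$ is exactly the paper's $Q=q_{1}^{3}q_{2}^{2}q_{3}$. The only (welcome) refinement is that you explicitly dispose of the case where $a$ is itself a fourth power via the term $aQ^{0}$, a corner the paper's reduction to fourth-power-free $a$ leaves implicit.
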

\begin{proof}
Let us suppose that the statement is not true and take $a\in\bigcap_{Q=2}^{\infty}C_{Q}$. Without loss of generality we can assume that $a$ is fourth power-free. Write $a=q_{1}q_{2}^2q_{3}^{3}$, where $q_{1}, q_{2}, q_{3}$ are square-free pairwise co-prime integers. Let us take $Q=q_{1}^{3}q_{2}^2q_{3}$ and observe that $a\not\in C_{Q}$. Indeed, we note that the curve $E_{1}(a, Q):\;y^2=x^3+aQx$ is isomorphic over $\Q$ with the curve $E:\;y^2=x^3+x$. However, the rank of $E(\Q)$ is zero and the same is true for $E_{1}(a, Q)(\Q)$ - a contradiction. 
\end{proof}

Next, we note that for given $Q_{1}, Q_{2}\in\{2,\ldots 11\}, Q_{1}\neq Q_{2}$, our computations suggest that for sufficiently large $x$, the difference $C_{Q_{1}}(x)-C_{Q_{2}}(x)$ has a constant sign, and in fact there is  $C_{Q_{1}}(x)-C_{Q_{2}}(x) \rightarrow \pm\infty$ with $x$ going to infinity.

In the light of our computations one can formulate the following general problem.

\begin{prob}
Let $\cal{Q}=\{Q\in\N_{+}:\; Q\;\mbox{is fourth power free}\}$.
\begin{enumerate}
\item
Characterize pairs $Q_{1}, Q_{2}\in\cal{Q}, Q_{1}>Q_{2}$ of positive integers such that the sign of the function $C_{Q_{1}}(x)-C_{Q_{2}}(x)$ is eventually constant.
\item Does there exist $Q_{1}, Q_{2}\in\cal{Q}$ such that $C_{Q_{1}}(x)-C_{Q_{2}}(x)$ changes the sign infinitely often?
\end{enumerate}
\end{prob}

\bigskip

In the light of Theorem \ref{fixeda}, for given $a\in\cal{R}$ one can define the quantity
$$
m(a)=\op{min}\{Q\in\N_{\geq 2}:\;(a, Q)\;\mbox{is a proper solution of Problem \ref{mainprob}}\}.
$$
Although it is not clear that the function is well defined, results of our computations suggest that this is the case. However, as we observed, the corresponding value of $Q$ computed from rational point on the elliptic curve $\cal{E}_{a}(p, v)$ constructed in Theorem \ref{fixeda} is much bigger than it is expected. More precisely, from our computations it follows that for each positive $a\in\cal{R}\cap [1, 10^4]$, the value of $m(a)$ exists. The values of $m(a)$ for $a\in \cal{R}\cap [3, 60]$ are given in Table 4 below.

\begin{equation*}
\begin{array}{|c|ccccccccccccccc|}
\hline
a   & 3 & 5 & 8 & 9 & 13 & 14 & 15 & 18 & 19 & 20 & 21 & 24 & 28 & 29 & 31 \\
 \hline
m(a) & 17 & 17 & 5 & 17 & 15 & 12 & 6 & 5 & 5 & 3 & 14 & 5 & 11 & 5 & 3 \\
 \hline
 \hline
a &      33 & 34 & 35 & 37 & 39 & 40 & 46 & 47 & 48 & 49 & 51 & 53 & 55 & 56 & 60 \\
\hline
m(a) &    5 & 10 & 3 & 3 & 6 & 3 & 12 & 2 & 17 & 17 & 7 & 11 & 3 & 5 & 3 \\
\hline
\end{array}
\end{equation*}
\begin{center}
Table 4. The values of $m(a)$ for $a\in\cal{R}\cap [3, 60]$.
\end{center}

In fact, we have
$$
\{m(a):\; a\in\cal{R}\cap [1, 10^4]\}=\{2, 3, 5, 6, 7, 10, 11, 12, 13, 14, 15, 17, 21, 22, 23, 26\}.
$$
Let us write $\cal{R}=\{a_{1}, a_{2}, \ldots\}$, where $a_{i}<a_{i+1}$ for $i\in\N_{+}$. On Figure 4 below, we present the plot of the function $m(a_{n})$ for $n\in\{1,\ldots, 6005\}$, i.e., $a_{n}\in\cal{R}\cap [1, 10^4]$.
\begin{figure}[h]\label{picture} 
       \centering
         \includegraphics[width=5in]{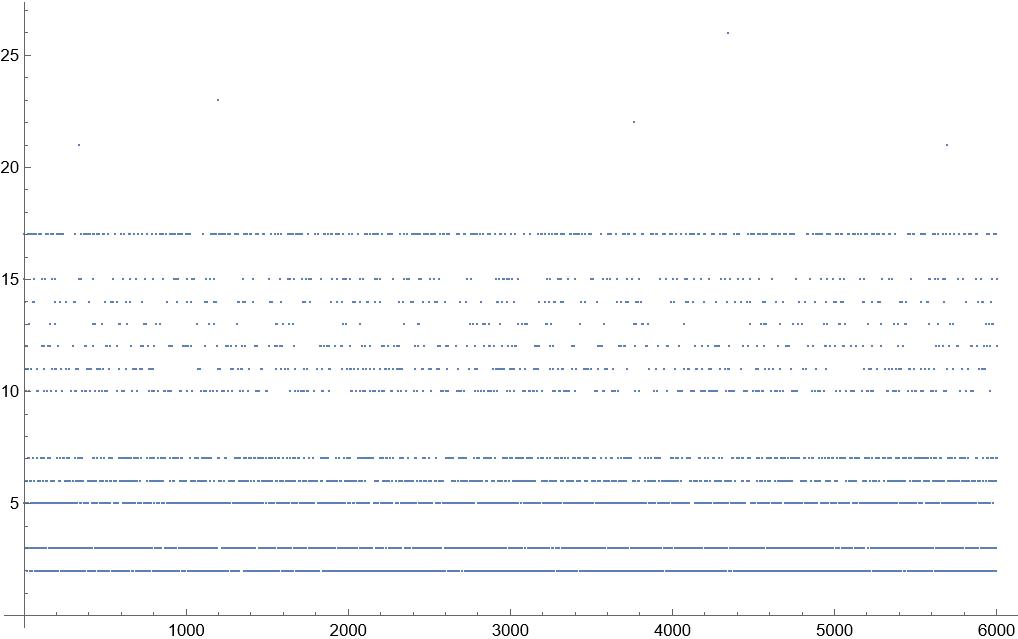}
        \caption{Plot of the function $f(n)=m(r_{n})$, where $n\leq 6005$.}
       \label{fig:disc1}
    \end{figure}

In the table below, for each element $Q$ of the above set we present the smallest value of $a\in\cal{R}$ such that $m(a)=Q$.

\begin{equation*}
\begin{array}{|c|cccccccc|}
\hline
 m(a) &   2 & 3  & 5 & 6 & 7  & 10 & 11 & 12   \\
\hline
  a   &  47 & 20 & 8 & 15& 51 & 34 & 28 & 14 \\
\hline\hline
 m(a)  & 13  & 14  & 15 & 17& 21 & 22 & 23 & 26 \\
\hline
   a   & 63  &21   & 13 & 3 & 594& 6295 & 2028 & 7255  \\
\hline
\end{array}
\end{equation*}
\begin{center}
Table 4. The smallest value of $a$ such that $m(a)=Q$, where $Q\in \{m(a):\;  a\in\cal{R}\cap [1, 10^4]\}$.
\end{center}

We formulate the following:

\begin{conj}\label{mdefined}
The function $m:\;\cal{R}\rightarrow \{Q\in\N_{+}:\;Q\;\mbox{is fourth power-free}\}$ is well defined.
\end{conj}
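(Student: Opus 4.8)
The plan is to reduce Conjecture~\ref{mdefined} to Conjecture~\ref{fixedacurve}. Fix $a\in\cal{R}$. Since $f$ is weighted homogeneous of degree $4$, the set $\cal{V}_{f}$ is stable under multiplication and division by fourth powers, so whether $aQ^{i}\in\cal{V}_{f}$ depends only on $i\bmod 4$ and on the class of $Q$ in $\Q^{*}/(\Q^{*})^{4}$. Combined with the equivalence ``$(a,Q)$ solves Problem~\ref{mainprob} iff $(a,Q^{3})$ does'' noted in Section~\ref{sec3}, this gives: $(a,Q)$ is a solution iff $(a,Q\,t^{4})$ is for every $t$, and iff $(a,Q^{3})$ is. Since reducing $Q^{3}$ modulo fourth powers interchanges the primes occurring in $Q$ to exponent $1$ with those occurring to exponent $3$, a short elementary computation shows that every non-square fourth power-free $Q\geq 2$ can be replaced, without changing the solution status, by a fourth power-free $Q'\geq 2$ carrying some prime to exponent exactly $1$, i.e.\ by a \emph{proper} solution. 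Hence, granting Conjecture~\ref{fixedacurve}, Theorem~\ref{fixeda} furnishes, for each $a\in\cal{R}$, infinitely many fourth power-free non-square integers $Q\geq 2$ with $\cal{G}(a,Q)\subset\cal{V}_{f}$, each of which can be turned into a proper solution; thus $m(a)$ is well defined, and it suffices to prove Conjecture~\ref{fixedacurve}.

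To attack Conjecture~\ref{fixedacurve} one must exhibit, for every $a\in\cal{R}$, a rational specialization $(p_{0},v_{0})$ with $p_{0}(p_{0}^{2}+a)$ not a square and $\op{rank}\,\cal{E}_{a}(p_{0},v_{0})(\Q)>0$. The natural route is to find a point of infinite order on the generic fibre, i.e.\ on $\cal{E}_{a}$ viewed as an elliptic curve over $\Q(p,v)$ (ideally with coordinates rational in $a$ as well); Silverman's specialization theorem then forces $\op{rank}\,\cal{E}_{a}(p_{0},v_{0})(\Q)>0$ for all but finitely many specializations along any one-parameter subfamily, and since the excluded locus $p_{0}(p_{0}^{2}+a)=\square$ is a proper subvariety one may still choose an admissible $(p_{0},v_{0})$. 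It helps here that $\cal{E}_{a}$ has full rational $2$-torsion: writing $c=ap(p^{2}+a)$ and $w=(p^{2}+a)pv^{2}$ one has $f_{1}=cw^{2}$ and $f_{2}=c(w^{2}-1)$, so the equation is $Y^{2}=X(X-cw^{2})(X-c(w^{2}-1))$, and a section can be sought by forcing two of $X$, $X-cw^{2}$, $X-c(w^{2}-1)$ to be squares in $\Q(p,v)$ and asking that the resulting conic carry a rational point functorially in $a$.

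The main obstacle is the one the authors already note for the companion variety $\cal{C}'$: there seems to be no section of $\cal{E}_{a}$ over $\Q(p,v)$ that is uniform in $a$, so a single generic construction cannot cover all of $\cal{R}$. The realistic plan is instead an $a$-by-$a$ construction fed by the arithmetic of $E:\,y^{2}=x^{3}+ax$ itself: starting from a rational point $P_{0}=(x_{0},y_{0})$ of infinite order on $E$ — which exists precisely because $a\in\cal{R}$ — one runs the chain of parametrized conics from the proof of Theorem~\ref{fixeda} backwards to produce a specialization $(p_{0},v_{0})$ together with an explicit point of $\cal{E}_{a}(p_{0},v_{0})(\Q)$ whose coordinates are rational functions of $x_{0},y_{0},a$. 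Two points then remain: that $p_{0}(p_{0}^{2}+a)$ is not a square (a congruence or direct elimination argument, exploiting the remaining free parameters), and — the genuinely hard step — that the constructed point has infinite order, which requires controlling it through the whole tower of birational maps relating the system~(\ref{Qsys}) to $\cal{E}_{a}$. A canonical height estimate, or a reduction argument modulo a well-chosen prime of good reduction, is the natural tool for the latter, but making either work uniformly in $a$ is exactly what currently keeps Conjecture~\ref{fixedacurve}, and therefore Conjecture~\ref{mdefined}, open; the alternative of choosing $Q$ so that the four twists $E_{a},E_{aQ},E_{aQ^{2}},E_{aQ^{3}}$ simultaneously have positive rank looks no easier, since even a positive-density statement for positive-rank quadratic twists of a fixed curve is out of reach.
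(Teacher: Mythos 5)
The statement here is Conjecture \ref{mdefined}, and the paper contains no proof of it: the only thing the paper says about it is the one-sentence remark that Conjecture \ref{fixedacurve} implies Conjecture \ref{mdefined}. Your first paragraph supplies that implication correctly, and in more detail than the paper does. The key points --- that $\cal{V}_{f}$ is a union of classes of $\Q^{*}$ modulo fourth powers because $f$ is weighted homogeneous of degree $4$, that consequently $(a,Q)$, $(a,Qt^{4})$ and $(a,Q^{3})$ have the same solution status, and that any non-square fourth-power-free integer $Q\geq 2$ can therefore be replaced by one carrying a prime to exponent exactly $1$ (pass from $Q=q_{2}^{2}q_{3}^{3}$ to $q_{2}^{2}q_{3}$ when no prime divides $Q$ to exponent $1$) --- are all sound, so, granting Conjecture \ref{fixedacurve}, Theorem \ref{fixeda} does make $m(a)$ well defined for every $a\in\cal{R}$. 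This is exactly the paper's (unproved) reduction.

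Everything after that is a research programme rather than a proof, and you are candid about this. The genuine gaps are the ones you yourself name: Conjecture \ref{fixedacurve} would follow from a point of infinite order on $\cal{E}_{a}$ over $\Q(p,v)$ together with Silverman's specialization theorem, but no such section uniform in $a$ is exhibited (and the paper's experience with the companion variety $\cal{C}'$ suggests none is easily available); the alternative $a$-by-$a$ construction starting from a point of infinite order on $y^{2}=x^{3}+ax$ leaves unverified both the non-square condition on $p_{0}(p_{0}^{2}+a)$ and, crucially, the infinite order of the constructed point on $\cal{E}_{a}(p_{0},v_{0})$ after it has been pushed through the tower of birational maps. Since neither route is carried out, Conjecture \ref{mdefined} remains open after your attempt, exactly as it is in the paper; what you have actually established is only the implication from Conjecture \ref{fixedacurve} to Conjecture \ref{mdefined}, which the paper asserts without detail.
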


It is clear that Conjecture \ref{fixedacurve} implies Conjecture \ref{mdefined}. However, it is not clear whether there is an implication in the other side.

\begin{conj}\label{mlimitpoints}
The function $m:\;\cal{R}\rightarrow \{Q\in\N_{+}:\;Q\;\mbox{is fourth power-free}\}$ is onto and each element in its image is a limit point. In particular
\begin{equation*}
\liminf_{a\in\cal{R}}m(a)=2,\quad \limsup_{a\in\cal{R}}m(a)=+\infty.
\end{equation*}
\end{conj}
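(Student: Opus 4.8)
The plan is to split the statement into its constituent assertions and reduce each to a question about the families $C_{Q}$ and about ranks of the quartic twists $y^{2}=x^{3}+nx$; throughout we assume Conjecture \ref{mdefined}, that $m$ is well defined on $\cal{R}$ (which would in any case follow from Conjecture \ref{fixedacurve}). Since every value of $m$ is $\ge 2$ by definition, both $\liminf_{a\in\cal{R}}m(a)=2$ and the assertion ``$2$ is a limit point of the image of $m$'' are equivalent to the single statement that $C_{2}$ is infinite (with the convention, as in Section \ref{sec4}, that $a\in C_{Q}$ is witnessed by curves $E_{i}(a,Q)$ of positive rank); here one uses that $2$ is not powerful, so every solution $(a,2)$ is automatically of level $1$, i.e.\ proper. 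More generally, ``$Q_{0}$ is a limit point of the image of $m$'' means: there are infinitely many $a\in\cal{R}$ with $(a,Q_{0})$ a proper solution of Problem \ref{mainprob} while $(a,Q)$ is not a proper solution for every $Q$ with $2\le Q<Q_{0}$; surjectivity asks for at least one such $a$ per admissible $Q_{0}$. (One should note that $m$ can only take non-powerful values, since a level-$1$ solution requires a prime occurring to the first power in $Q$; so ``onto'' must be read as onto the non-powerful fourth power-free integers $\ge 2$.) Finally $\limsup_{a\in\cal{R}}m(a)=+\infty$ follows once, for every $N$, one produces infinitely many $a\in\cal{R}$ with $m(a)>N$.

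For the ``positive'' half --- exhibiting, in infinite families, values $a$ for which $(a,Q_{0})$ is a proper solution --- I would start from the parametrization around \eqref{asys1}--\eqref{asys2}: the substitution $x_{i}=p_{i}T,\ y_{i}=q_{i}T$ and elimination of $T$ leads to the genus-$1$ curve $\cal{C}'_{Q_{0}}$ (the intersection of the two quadrics in $q_{0},\dots,q_{3}$ displayed there) over $\Q(p_{0},p_{1},p_{2},p_{3})$, with the corresponding $a$ given by the displayed rational expression in the $p_{i},q_{i}$. For $a>0$ not equal to four times a fourth power, the torsion subgroup of $y^{2}=x^{3}+aQ_{0}^{i}x$ is just $\{O,(0,0)\}$, so any rational point with nonzero $x$-coordinate is automatically non-torsion and witnesses positive rank; hence it is enough to make $\cal{C}'_{Q_{0}}$ acquire such points in a family. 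Concretely: restrict the $\overline{p}$-space to a rational curve through the $\overline{p}$-value of a known proper solution (e.g.\ $(47,2)$ from Table 1, or the entries of Table 2), chosen so that $\cal{C}'_{Q_{0}}$ restricted to it becomes an elliptic surface of positive Mordell--Weil rank; then apply the specialization theorem to obtain infinitely many specializations with $\cal{C}'_{Q_{0}}(\Q)\ne\emptyset$, discarding (exactly as in the proof of Theorem \ref{fixeda}) the finitely many degenerate ones and those for which the resulting $a$ is non-positive, zero, four times a fourth power, or repeats a fourth-power class already used. Optimistically, the constructed $a$'s then fill an infinite set, no two equal modulo fourth powers.

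For the ``negative'' half --- forcing $m(a)$ to be exactly $Q_{0}$, respectively $>N$ --- I would superimpose a descent sieve on $a$. For each of the finitely many integers $n$ that must be eliminated (namely $n=aQ$ for the non-powerful $Q$ with $2\le Q<Q_{0}$, resp.\ $2\le Q\le N$), classical $2$- and $4$-descent on $y^{2}=x^{3}+nx$ provides explicit congruence conditions on $n$ forcing its rank to be $0$; since for $n>0$ not four times a fourth power such a curve has a point with $x\ne 0$ only when its rank is positive, these conditions make the corresponding $(a,Q)$ fail to be a solution. One then runs the family of the previous paragraph inside the arithmetic progression in $a$ cut out by these congruences. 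This needs the positive family to survive in a prescribed residue class (a refinement of the specialization step, of positive-density or explicit-subfamily type), together with a check that ``$a\in\cal{R}$ with $(a,Q_{0})$ proper'' and ``$aQ$ of rank $0$ for the relevant smaller $Q$'' are compatible local conditions --- plausible, since they constrain $a$ at disjoint primes.

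The central obstacle is that the positive half is itself open: $\cal{C}'_{Q}$ has no generic rational point (as noted after \eqref{asys2}), so even ``$C_{Q}$ is infinite for every $Q\ge 2$'' --- let alone the refinements needed here --- requires producing, case by case, a positive-rank elliptic surface over (a curve in) the $\overline{p}$-base, or a genuinely new construction; this is precisely the companion conjecture of Section \ref{sec4}. Granting it, the negative half and the exact-order requirement $m(a)=Q_{0}$ are the secondary difficulty, because they demand persistence of the positive family inside a congruence class. Only the bare assertion $\liminf_{a\in\cal{R}}m(a)=2$ escapes the negative half, reducing cleanly to ``$C_{2}$ is infinite''. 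Accordingly, what one could write down unconditionally is a statement conditional on (a suitable strengthening of) the infinitude of each $C_{Q}$, and --- for the rank-$0$ input --- on standard parity/BSD heuristics producing enough rank-$0$ twists $y^{2}=x^{3}+nx$ in the needed congruence classes; under those hypotheses the argument above yields all four assertions.
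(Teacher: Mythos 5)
The statement you are addressing is labelled a \emph{conjecture} in the paper; there is no proof to compare against, and what you have written is a conditional program rather than a proof, as you yourself acknowledge. Within that framing, two of your observations are genuinely valuable. Your remark that $m$ cannot take powerful values such as $Q=4$ or $Q=9$ is correct: a proper solution requires a prime dividing $Q$ to exactly the first power, so ``onto'' must be read as onto the non-powerful fourth-power-free integers $\geq 2$, and this is consistent with Table 4, where $4, 8, 9$ never occur as values of $m$. You should, however, push a second observation further than you do: since $(a,Q)$ is a proper solution if and only if $(at^{4},Q)$ is, and $\cal{R}$ and each $C_{Q}$ are stable under $a\mapsto at^{4}$, one has $m(at^{4})=m(a)$; hence both ``$\liminf_{a\in\cal{R}}m(a)=2$'' and ``every element of the image is a limit point'' are, \emph{as literally stated}, immediate from single examples (e.g.\ $m(47\cdot 16^{k})=2$ for all $k$). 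The substantive conjecture must quantify over fourth-power-free $a$, and your claimed equivalence ``$\liminf=2$ iff $C_{2}$ is infinite'' inherits the same defect, since $C_{2}$ is trivially infinite once it is nonempty. After these corrections, the only clauses with real content are surjectivity onto the corrected codomain, the limit-point property modulo fourth powers, and $\limsup m=+\infty$.

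Those remaining clauses are exactly where your argument has genuine gaps. For the positive half, your plan --- restrict $\cal{C}'_{Q_{0}}$ from (\ref{asys2}) to a rational curve in $\overline{p}$-space through a known solution and hope for a positive-rank elliptic surface --- is the paper's own device for $Q=2$, but the paper records that $\cal{C}'_{Q}$ has no generic point and that finding such sections is itself open; and even granting a positive-rank fibre, pairwise inequivalence of the resulting $a$'s modulo fourth powers is not ``optimistic bookkeeping'' but needs an argument like the genus-$5$ curve count in the proof of Theorem \ref{fixeda}, adapted to this family. The negative half is in worse shape: it is not true that $2$- or $4$-descent furnishes congruence conditions on $n$ \emph{alone} that force $y^{2}=x^{3}+nx$ to have rank $0$. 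The $2$-Selmer bound depends on the factorization of $n$ and on the solvability of the associated homogeneous spaces, not merely on a residue class of $n$; the classical rank-zero criteria apply to $n$ of restricted shape (e.g.\ $n$ prime in suitable classes modulo $16$), and such factorization conditions cannot be imposed on $n=aQ$ for all $2\le Q\le N$ simultaneously while $a$ runs through a parametric family engineered to make the $Q_{0}$-twists have positive rank. Without a mechanism producing rank-$0$ twists inside such families, you cannot force $m(a)=Q_{0}$ exactly, nor $m(a)>N$; so surjectivity and $\limsup m=+\infty$ remain out of reach even conditionally on the infinitude of the sets $C_{Q}$ and on Conjectures \ref{fixedacurve} and \ref{mdefined} that you assume.
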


One can also ask about average behaviour of the function $m(a_{n})$. On Figure 5 we plot the function
$$
\cal{A}(x)=\frac{\sum_{i\leq x}m(a_{i})}{x}
$$
for $x\leq 6005$. Based on our computations we dare to formulate the following:

\begin{conj}
If Conjecture \ref{mdefined} is true, then the function $\cal{A}$ is bounded.
\end{conj}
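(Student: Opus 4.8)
The statement really asserts that the arithmetic function $m$ has a finite average along $\cal{R}$, so the plan is to reduce it to a counting estimate and then to a density statement about the sets $C_Q$ from Section \ref{sec4}. Write $R(T)=\#(\cal{R}\cap[1,T])$; by definition of the increasing enumeration $a_1<a_2<\cdots$ of $\cal{R}$ we have $R(a_n)=n$, hence
$$
\cal{A}(n)=\frac1n\sum_{i\le n}m(a_i)=\frac{1}{R(a_n)}\sum_{a\in\cal{R},\, a\le a_n}m(a).
$$
So it suffices to prove the uniform bound $\sum_{a\in\cal{R},\, a\le T}m(a)=O\big(R(T)\big)$, the implied constant then being an upper bound for $\cal{A}$. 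Note that no lower bound on the size of $\cal{R}$ itself is needed for this reduction — only the relative behaviour of $m$ inside $\cal{R}$.

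Next I would enumerate $2\le Q_1<Q_2<\cdots$ the integers that are fourth-power-free and not powerful; these form a set of positive density, so $Q_k\asymp k$, and one checks that $m$ can only take values in this set (if $Q$ is divisible by a fourth power, $C_Q$ coincides with $C_{Q_0}$ for a strictly smaller admissible $Q_0$). Put $D_k=\bigcup_{j\le k}C_{Q_j}$. Under Conjecture \ref{mdefined} the $C_{Q_j}$ cover $\cal{R}$, and $m(a)=Q_k$ exactly when $a\in C_{Q_k}\setminus D_{k-1}$, so
$$
\sum_{a\in\cal{R},\, a\le T}m(a)=\sum_{k\ge1}Q_k\,\#\big((C_{Q_k}\setminus D_{k-1})\cap[1,T]\big)\ \le\ \sum_{k\ge1}Q_k\,\#\big((\cal{R}\setminus D_{k-1})\cap[1,T]\big).
$$
Everything then reduces to a geometric-decay estimate of the form $\#\big((\cal{R}\setminus D_k)\cap[1,T]\big)\ll\varepsilon^{\,k}R(T)$ for some absolute $\varepsilon\in(0,1)$ (with routine control of secondary error terms): granting it, $\sum_k Q_k\varepsilon^{\,k-1}R(T)\ll R(T)$ since $\sum_k Q_k\varepsilon^{\,k-1}<\infty$, while the range $k\gg\log T$ is empty because there the bound already forces the counted set to be empty. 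This would complete the argument.

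The crux — and the main obstacle — is the geometric-decay estimate, which says that at every stage the next set $C_{Q_{k+1}}$ absorbs a fixed positive proportion of the elements of $\cal{R}$ not yet covered by $D_k$. Working modulo fourth powers, $a\in C_Q$ amounts to the three quartic twists $y^2=x^3+aQ^{j}x$, $j=1,2,3$, of $E_0(a):y^2=x^3+ax$ all having positive rank; so the required statement is a quasi-independence property for positive-rank events in the family of quartic twists of $y^2=x^3+x$: for $a\in\cal{R}$ the probability that these three twists all have positive rank is bounded below by an absolute constant, and stays bounded below after conditioning on $a\notin D_k$, which only pins down the ranks of finitely many further twists of $E_0(a)$. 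A sieve organised according to the class of $Q$ modulo squares and fourth powers, combined with such independence, would give the decay with an explicit $\varepsilon$ and, pushed further, predict the sharper asymptotic $\cal{A}(n)\to\bbb{E}[m]$.

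Making this rigorous is where the genuine difficulty lies. Controlling the joint distribution of ranks in a family of quartic twists — in particular, obtaining asymptotic independence with enough uniformity to sum over all $k$, and a usable lower bound for the density of $\cal{R}$ against which to measure the sieve errors — is beyond present unconditional methods: it would require at least the Birch--Swinnerton-Dyer conjecture (to pass from ranks to root numbers and central $L$-values), the parity conjecture, and Katz--Sarnak/Goldfeld-type equidistribution results for ranks in quartic twist families, and even these standard conjectures do not obviously supply the joint independence. Note that ``$\cal{A}$ is bounded'' is strictly stronger than the paper's conjectures that each $C_Q$ is nonempty and grows linearly, since it demands uniform control over all $Q$ at once together with their near-independence, whereas Conjecture \ref{mdefined} enters only to ensure that the decomposition terminates. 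Thus a complete proof needs genuinely new input beyond Conjecture \ref{mdefined}, and the realistic target is a conditional theorem together with the explicit heuristic prediction for the limiting value of $\cal{A}$.
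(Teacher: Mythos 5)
The statement you are tackling is one of the paper's open conjectures: the author offers no proof at all, only the numerical evidence for $n\le 6005$ plotted in Figure 5, so there is no argument of the paper to measure yours against. Your write-up is, by your own admission, not a proof but a reduction, and the reduction itself is sound. Enumerating the admissible quotients $Q_1<Q_2<\cdots$ (fourth-power-free and not powerful, so that $(a,Q)$ can be a \emph{proper} solution and $Q_k\asymp k$), setting $D_k=\bigcup_{j\le k}C_{Q_j}$, using Conjecture \ref{mdefined} only to guarantee that these sets cover $\cal{R}$, and bounding
$$
\sum_{a\in\cal{R},\,a\le T}m(a)\ \le\ \sum_{k\ge 1}Q_k\,\#\bigl((\cal{R}\setminus D_{k-1})\cap[1,T]\bigr)
$$
is all legitimate (the needed inclusion $C_{Q_k}\subseteq\cal{R}$ holds because $a\in C_Q$ already forces $y^2=x^3+ax$ to have a nontorsion point, and $m(a)=Q_k$ is indeed equivalent to $a\in C_{Q_k}\setminus D_{k-1}$ since the non-admissible $Q$, such as $Q=4$, do not enter the definition of $m$).

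The genuine gap is exactly where you say it is: the estimate $\#\bigl((\cal{R}\setminus D_k)\cap[1,T]\bigr)\ll\varepsilon^{k}R(T)$ uniformly in $k$ and $T$. This is a quantitative quasi-independence assertion for the positive-rank events attached to the three quartic twists $y^2=x^3+aQ^jx$, $j=1,2,3$, holding simultaneously for every admissible $Q$ and after conditioning on membership in the complement of $D_k$; nothing of this strength is available, even conditionally on BSD, parity, and equidistribution of root numbers in quartic twist families, and you correctly flag that the joint independence does not follow from those standard conjectures anyway. Two smaller remarks if you pursue this: geometric decay is more than you need (any summable decay such as $\ll k^{-2-\delta}R(T)$ already gives $\sum_k Q_k\,k^{-2-\delta}<\infty$, though this does not make the hypothesis more tractable), and your framework, taken at face value, predicts the sharper statement $\cal{A}(n)\to\bbb{E}[m]$, which is consistent with the flattening visible in the paper's plot and would be worth recording as the ``right'' form of the conjecture. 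So the verdict is: no proof, but a correct diagnosis of why the statement is a conjecture, together with a clean sufficient condition that isolates the arithmetic difficulty.
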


\begin{figure}[h]\label{picture} 
       \centering
         \includegraphics[width=5in]{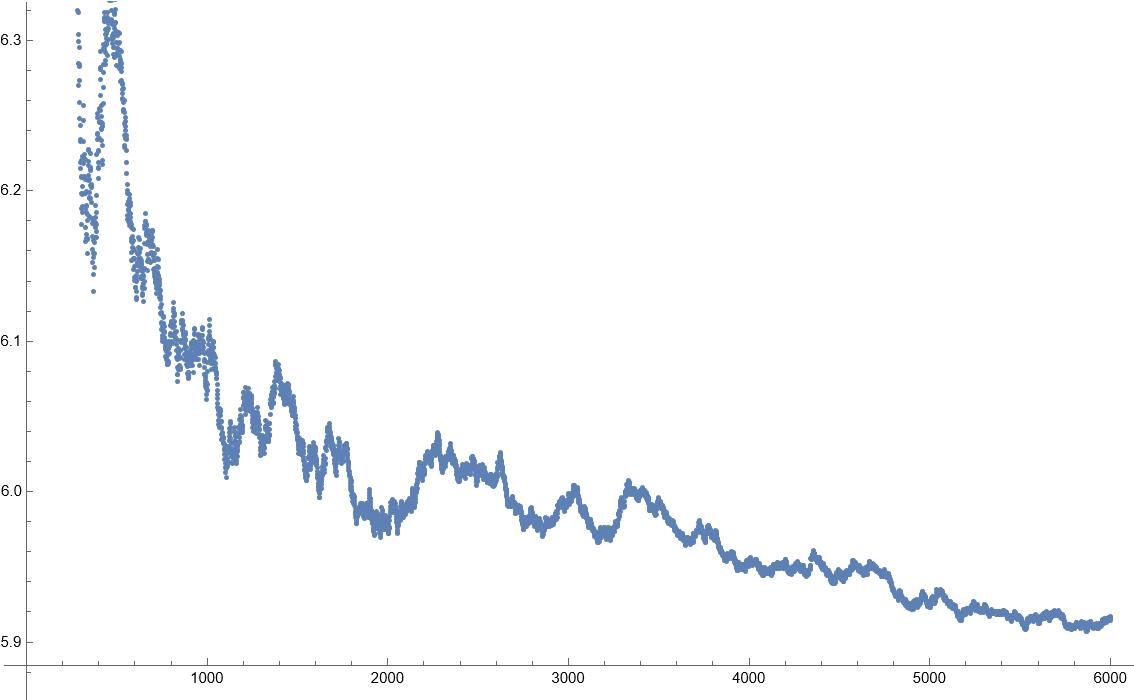}
        \caption{Plot of the function $\cal{A}(x)$, where $x\leq 6005$.}
       \label{fig:disc1}
    \end{figure}

\bigskip

In the light of our findings concerning the questions around Problem \ref{mainprob} for $f(x, y)=(y^2-x^3)/x$, one can ask and investigated similar questions in the case of the polynomial $f(x,y)=y^2-x^3$, both form computational and theoretical point of view. To be more precise, we are interested in the existence of proper solutions of Problem \ref{mainprob}, i.e., $Q$ is neither as quare nor a cube and for each $i\in\N$ the rank of elliptic curve
$$
E'_{i}(a, Q):\;y^2=x^3+aQ^{i}
$$
is positive. We show that $(3, 2)$ is a proper solution of Problem \ref{mainprob}. To see this, one can compute the following data:
\begin{equation*}
\begin{array}{|c|c|l|}
\hline
  i & r(E'_{i}(3, 2)) & \mbox{generators of}\;E'_{i}(3, 2)(\Q) \\
\hline
  0 & 1 & (-2, 5) \\
  1 & 1 & (1/4, 65/8) \\
  2 & 1 & (4, 14)  \\
  3 & 1 & (-2, -16) \\
  4 & 2 & (16, 68), (-8, 4) \\
  5 & 1 & (48217/5041, -15728083/357911) \\
\hline
\end{array}
\end{equation*}
\begin{center}
Table 5. Ranks and the generators for the elliptic curves $E_{i}'(3, 2):\;y^2=x^3+3\cdot2^{i}, i=0, \ldots, 5$.
\end{center}

Consequently, the pair $(32, 2)$ is a proper solution of Problem \ref{mainprob} for $f(x, y)=y^2-x^3$.

We formulate the following:

\begin{prob}
Investigate properties of the set of proper solutions of Problem \ref{mainprob} for $f(x, y)=y^2-x^3$. In particular prove the following:
\begin{enumerate}
\item For each $a\in\Z$ such that the elliptic curve $y^2=x^3+a$ has positive rank, there are infinitely many values of $Q\in\Z$ such that $(a, Q)$ is a proper solution of Problem \ref{mainprob}.
\item For each $Q\in\Z\setminus\{-1,0,1\}$, there are infinitely many values of sixth power-free integers $a$ such that $(a, Q)$ is a proper solution of Problem \ref{mainprob}.
\end{enumerate}
\end{prob}



\end{document}